\newtheorem{lemma}{Lemma}[section]
\newtheorem{theorem}[lemma]{Theorem}
\newtheorem{proposition}[lemma]{Proposition}
\newtheorem{conjecture}[lemma]{Conjecture}
\newtheorem{corollary}[lemma]{Corollary}
\theoremstyle{definition}
\newtheorem{definition}[lemma]{Definition}
\numberwithin{equation}{section}
\numberwithin{figure}{section}
\newcommand{\nix}{{\mbox{\vphantom x}}}
\newcommand{\Aset}{\mathcal{A}}
\newcommand{\Bset}{\mathcal{B}}
\newcommand{\Lset}{\mathcal{L}}
\newcommand{\Mset}{\mathcal{M}}
\newcommand{\Nset}{\mathcal{N}}
\newcommand{\Sset}{\mathcal{S}}
\newcommand{\Xset}{\mathcal{X}}
\begin{document}

\title{\large{On the usage of $2$-node lines in $n$-correct and $GC_n$ sets}}

\author{H. Hakopian, \  G. Vardanyan, \  N. Vardanyan}

\author{H. Hakopian, G. Vardanyan, N. Vardanyan\\ \\
 \textit{Yerevan State University, Yerevan, Armenia}\\   \textit{Institute of Mathematics of NAS RA}}
\date{}
\maketitle

\begin{abstract} 
An n-correct set $\Xset$ in the plane is a set of nodes admitting unique
 interpolation with bivariate polynomials of total degree at most $n$.
A $k$-node line is a line passing through exactly $k$ nodes of $\Xset.$ A line can pass through at most $n+1$ nodes of an $n$-correct set. An $(n+1)$-node line is called maximal line (C. de Boor, 2007). We say that a node $A\in\Xset$ uses a line $\ell,$ if $\ell$ is a factor of the fundamental polynomial of the node $A.$

Let $\Xset$ be an $n$-correct set.
One of the main problems we study in this paper is to determine the maximum possible number of used $2$-node lines that share a common node $B \in \mathcal{X}$.

We show that this number equals $n$. Moreover,  if there are $n$ such $2$-node lines, then $\mathcal{X}$ contains exactly $n$ maximal lines not passing through the common node $B$. Furthermore, if $\mathcal{X}$ is $GC_n$ set, there exists an additional maximal line passing through $B$. Hence, in this case, $\mathcal{X}$ has $n+1$ maximal lines and is Carnicer–Gasca set of degree $n$. Note that Carnicer–Gasca sets of degree $n$ with a prescribed set of $n$ used $2$-node lines can be readily constructed.
\end{abstract}

{\bf Key words:}
Bivariate interpolation, Carnicer-Gasca set, $n$-correct set, $GC_n$ set, maximal line, maximal curve.

{\bf Mathematics Subject Classification 2020:} 41A05, 41A63, 14H50


\section{Introduction}
Denote by $\Pi_n$ the space of bivariate polynomials of total degree
$\le n.$ We set
$$
N:=\dim \Pi_n,\ \hbox{where}\ N=(1/2){(n+1)(n+2)}.
$$
\noindent Consider a set of $s$ distinct nodes in the plane:
$$\Xset:=\Xset_s=\{ (x_1, y_1), \dots , (x_s, y_s)\}.$$

The problem of finding a polynomial $p \in \Pi_n$ satisfying the conditions
\begin{equation}\label{int cond}
p(x_i, y_i) = c_i, \ \ \quad i = 1, 2, \dots s  ,
\end{equation}
for a given data $\bar c:=\{c_1, \dots, c_s\}$ is called \emph{interpolation problem.}
\begin{definition}
A set of nodes $\Xset_s$ is called
$n$-\emph{correct} if for any data $\bar c$ there exists a
unique polynomial $p \in \Pi_n$, satisfying the conditions
\eqref{int cond}.
\end{definition}
A necessary condition of
$n$-correctness is: $\#\Xset_s=s = N.$

Denote by $p|_\Xset$ the restriction of $p$ on $\Xset.$
\begin{proposition} \label{correctii}
A set of nodes $\Xset$ with $\#\Xset=N$ is $n$-correct if and only if
$$p \in \Pi_n,\ p|_\Xset=0\implies p = 0.$$
\end{proposition}
A polynomial $p \in \Pi_n$ is called an $n$-\emph{fundamental polynomial}
for $ A \in \Xset$ if
$$ p|_{\Xset\setminus\{A\}}=0\ \hbox{and}\ p(A)=1.$$
We denote an
$n$-fundamental polynomial of $A \in\Xset$ by $p_A^\star=p_{A,\Xset}^\star.$

Let us mention that sometimes we call $n$-fundamental a polynomial $p\in \Pi_n$ satisfying the conditions
$$ p|_{\Xset\setminus\{A\}}= 0\ \hbox{and}\ p(A)\neq 0,$$
since it equals a non-zero constant times $p_A^\star.$

\begin{definition}
A set of nodes $\Xset_s$ is called $n$-\emph{independent} if each node
has $n$-fundamental polynomial. 
\end{definition}
Fundamental polynomials are linearly independent. Therefore a
necessary condition of $n$-independence is $\#\Xset_s=s \le N.$

A \emph{plane algebraic curve} of degree $n,\ n\ge 1,$ is the zero set of some bivariate polynomial $p$ of degree $n,$ i.e., the set $\left\{A : p(A)=0\right\}.$ To simplify notation, we shall use the same letter,  say $p$,
to denote the polynomial $p$ and the curve given by the equation $p(x,y)=0$.
In particular, by $\ell$ we denote a linear
polynomial from $\Pi_1$ and the line defined by the equation
$\ell(x, y)=0.$

Let us mention that in the expressions like ${\mathcal X \cap p},\ \Xset\setminus p,$ by the polynomial $p\in\Pi_k$ we mean its zero set. 
\begin{definition} Let $\Xset$ be an $n$-correct set.
We say that a node $A\in\Xset$ uses a curve $q$ of degree $k\le n,$ if $q$ is a component of the fundamental polynomial of the node $A:$ $$ p^\star_A=q r,\ \hbox{where}\  r\in\Pi_{n-k}.$$
\end{definition}

\subsection{$\bf{GC_n},$ sets, Chung-Yao and Carnicer-Gasca sets}

Let us now consider a particular type of $n$-correct sets (see \cite{HV}) that satisfy a so-called geometric characterization (GC) property introduced by K.C. Chung and T.H. Yao \cite{CY77}:
\begin{definition}[\cite{CY77}]
An $n$-correct set ${\mathcal X}$ is called \emph{$GC_n$ set }
 if  the
$n$-fundamental polynomial of each node $A\in{\mathcal X}$ is a
product of $n$  linear factors.
\end{definition}
Thus, each node of $GC_n$ set uses exactly $n$ lines.

The following proposition is well-known (see e.g. \cite{HJZ09b}
Prop. 1.3):
\begin{proposition}\label{prp:n+1ell}
Suppose that a polynomial $p \in
\Pi_n$ vanishes at $n+1$ points of a line $\ell.$ Then, we have that
$
p = \ell  q  ,\ \text{where} \ q\in\Pi_{n-1}.
$
\end{proposition}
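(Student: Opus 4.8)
The plan is to reduce to a convenient normal form by an affine change of coordinates, exploiting the fact that affine transformations preserve total degree and carry lines to lines. First I would choose an invertible affine map $T$ taking the line $\ell$ onto the $x$-axis; replacing $p$ by $p \circ T^{-1}$ and $\ell$ by its image, it suffices to treat the case $\ell(x,y) = y$. The $n+1$ given points then become $n+1$ distinct points on the $x$-axis, with pairwise distinct first coordinates $t_1, \dots, t_{n+1}$.

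Next I would restrict $p$ to the line. The univariate polynomial $g(x) := p(x,0)$ has degree at most $n$, since $p \in \Pi_n$. By hypothesis $g$ vanishes at the $n+1$ distinct values $t_1, \dots, t_{n+1}$, so it has strictly more roots than its degree permits; hence $g \equiv 0$. Thus $p$ vanishes identically along the whole $x$-axis, not merely at the prescribed points. (Equivalently, without changing coordinates, one may parametrize $\ell$ as $t \mapsto (a+bt,\, c+dt)$ and observe that $p$ composed with this parametrization is a univariate polynomial of degree $\le n$ with $n+1$ roots, hence zero on all of $\ell$.)

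The final step is the factorization. Writing $p$ as a polynomial in $y$ whose coefficients are polynomials in $x$, the term free of $y$ is exactly $g(x) = p(x,0) = 0$. Therefore every monomial of $p$ is divisible by $y$, and we may write $p(x,y) = y\, q(x,y)$. Since multiplication by $y$ raises total degree by exactly one, $\deg q \le n-1$, that is $q \in \Pi_{n-1}$. Transforming back through $T$ returns the factorization $p = \ell q$ with $q \in \Pi_{n-1}$ in the original coordinates.

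I expect the only delicate point to be the degree bookkeeping: one must confirm both that the affine reduction preserves the total-degree bound (so that the restriction to $\ell$ remains of degree $\le n$) and that factoring out the single linear factor $\ell$ drops the total degree by exactly one, yielding $q \in \Pi_{n-1}$ rather than merely $q \in \Pi_n$. Everything else is the standard observation that a nonzero univariate polynomial of degree $\le n$ cannot possess $n+1$ distinct roots.
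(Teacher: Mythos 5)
Your proof is correct. The paper itself offers no proof of this proposition, citing it as well known (Prop.~1.3 of the reference by Hakopian, Jetter and Zimmermann), and your argument --- normalize by an affine map so that $\ell$ becomes $y=0$, observe that the restriction $p(x,0)$ is a univariate polynomial of degree $\le n$ with $n+1$ distinct roots and hence vanishes identically, then factor out $y$ with the total degree dropping by exactly one --- is precisely the standard proof behind that citation, with the degree bookkeeping handled correctly.
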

We say, that a line $\ell$ is a $k$-\emph{node line} if it passes through exactly $k$ nodes of $\Xset.$
The above proposition implies that at most $n+1$ nodes of an $n$-independent set  can be collinear. An $(n+1)$-node line $\ell$ is called a \emph{maximal line} (C. de Boor, \cite{dB07}).

Denote by $\Mset(\Xset)$ the set of maximal lines of an $n$-correct set $\Xset.$

Below some basic properties of maximal lines are presented.

\begin{proposition}[\cite{CG01}, Prop.2.1] \label{hatk} Let $\Xset$ be an $n$-correct set. Then:
\begin{enumerate}
\item
Any two maximal lines intersect at a node of $\Xset.$
\item
Any three maximal lines are not concurrent.
\item
$\#\Mset(\Xset)\le n+2.$
\end{enumerate}
\end{proposition}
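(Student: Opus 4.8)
The plan is to base everything on one reduction lemma: if $\ell$ is a maximal line of an $n$-correct set $\Xset$, then $\Xset'=\Xset\setminus\ell$ is $(n-1)$-correct. First I would check cardinalities, $\#\Xset'=N-(n+1)=n(n+1)/2=\dim\Pi_{n-1}$, so by Proposition~\ref{correctii} it suffices to show that any $q\in\Pi_{n-1}$ vanishing on $\Xset'$ is zero. For such $q$, the product $\ell q\in\Pi_n$ vanishes on all of $\Xset$: on the $n+1$ nodes of $\ell$ because of the factor $\ell$, and on $\Xset'$ because of $q$. Since $\Xset$ is $n$-correct, $\ell q\equiv 0$, hence $q\equiv 0$. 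I would also record the immediate consequence of Proposition~\ref{prp:n+1ell}: in an $(n-1)$-correct set at most $n$ nodes can be collinear.

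For part (i), let $\ell_1,\ell_2$ be distinct maximal lines meeting at the point $P$, and suppose toward a contradiction that $P\notin\Xset$. Passing to the $(n-1)$-correct set $\Xset'=\Xset\setminus\ell_1$, I observe that every one of the $n+1$ nodes of $\ell_2$ survives in $\Xset'$, since the only point of $\ell_2$ lying on $\ell_1$ is $P$, which is not a node. Thus $\ell_2$ carries $n+1$ collinear nodes of an $(n-1)$-correct set, contradicting the collinearity bound above. Hence $P\in\Xset$.

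For part (ii), suppose $\ell_1,\ell_2,\ell_3$ are maximal lines through a common point $P$; by part (i) each pair meets at a node, so $P\in\Xset$. Removing $\ell_1$ again yields the $(n-1)$-correct set $\Xset'$. Each of $\ell_2,\ell_3$ meets $\ell_1$ only at $P$, so in $\Xset'$ it loses exactly the node $P$ and retains exactly $n$ nodes; by the collinearity bound these are maximal (that is, $n$-node) lines of $\Xset'$. But $\ell_2\cap\ell_3=P\notin\Xset'$, which contradicts part (i) applied to $\Xset'$. Hence no three maximal lines are concurrent.

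For part (iii), let $\ell_1,\dots,\ell_m$ be the maximal lines and consider the intersection points $P_{ij}=\ell_i\cap\ell_j$ for $i<j$. By part (i) each $P_{ij}$ is a node, and by part (ii) these $\binom{m}{2}$ points are pairwise distinct: a coincidence $P_{ij}=P_{kl}$ with $\{i,j\}\ne\{k,l\}$ would force three of the lines through one point. Therefore $\binom{m}{2}\le\#\Xset=N=\binom{n+2}{2}$, and since $\binom{m}{2}$ is increasing in $m$ this gives $m\le n+2$. I expect the main obstacle to be not any single computation but the bookkeeping in parts (i) and (ii): one must argue that removing $\ell_1$ deletes exactly the node $P$ from each remaining line, so that the reduced lines have precisely $n$ nodes and the passage to $\Xset'$ is legitimate; the distinctness argument in (iii) is then the second point requiring care.
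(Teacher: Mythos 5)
Your argument is correct in substance and is essentially the standard Carnicer--Gasca argument that the paper cites without reproducing: your reduction lemma is exactly the paper's equivalence \eqref{maxmax} specialized to $k=1$ (a maximal line is a maximal curve of degree $1$, since $d(n,1)=n+1$), and parts (i)--(iii) then follow by the deletion-plus-collinearity-bound and pair-counting arguments, just as in \cite{CG01}. The counting in (iii) is sound: the $\binom{m}{2}$ intersection nodes are pairwise distinct by (ii), and $\binom{n+3}{2}>\binom{n+2}{2}=N$ forces $m\le n+2$; note that the cruder cover count $m(n+1)-\binom{m}{2}\le N$ would \emph{not} suffice here, so your choice of invariant is the right one.

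One small but real hole: in part (i) you begin ``let $\ell_1,\ell_2$ be distinct maximal lines \emph{meeting} at the point $P$,'' so as written you only prove that \emph{if} the lines meet, the meeting point is a node; the claim also asserts that two maximal lines cannot be parallel. This omission propagates into (iii), where your count of $\binom{m}{2}$ distinct nodes silently assumes every pair of maximal lines actually intersects. The patch is one line, by your own reduction lemma: if $\ell_1\parallel\ell_2$, then every one of the $n+1$ nodes of $\ell_2$ survives in the $(n-1)$-correct set $\Xset\setminus\ell_1$, contradicting the bound of at most $n$ collinear nodes --- the identical contradiction you already derive when $P\notin\Xset$. With that sentence added, and perhaps a remark that (ii) invokes (i) at degree $n-1$ (legitimate, since you proved (i) for every degree, with the degenerate case $n=1$ checkable directly), the proof is complete.
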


Let a set $\Lset=\{\ell_0,\ldots,\ell_{n+1}\}$ of $n+2$ lines be in general position, that is 
no two lines of $\Lset$ are parallel, and 
no three lines of $\Lset$ are concurrent.

Then the set $\Xset$ of ${n+2\choose 2}$ intersection points
of these lines is called Chung-Yao set of degree $n$ (see \cite{HV}):
\begin{equation}\label{Aij}\Xset= \{A_{ij}: 0\le i<j\le n+1\},\ \hbox{where}\ A_{ij}=\ell_i\cap\ell_j.\end{equation} 
 
Note that each node $A_{ij}$ belongs to $2$ lines: $\ell_i$ and $\ell_j,$ and the product of the remaining $n$ lines gives $p^\star_{A_{ij}}.$ Thus $\Xset$ is $GC_n$ set. It is easily seen that  the lines $\ell_i,\ i=0,\ldots,n+1,$ are maximal for $\Xset.$ In view of Proposition \ref{hatk}, (iii), no other line is maximal for the set $\Xset,$ that is, $\Lset=\Mset(\Xset).$

On the other hand if $\Xset$ is an $n$-correct set and $\#\Mset(\Xset)= n+2$ then $\Xset$ is Chung-Yao set of degree $n.$ Indeed, assume that $\Mset(\Xset)= \{\ell_0,\ldots,\ell_{n+1}\}$ is the set of maximal lines of $\Xset.$ Then, by using Proposition \ref{hatk}, (i),(ii), we readily get \eqref{Aij}.

Thus 
 an $n$-correct set $\Xset$ is Chung-Yao set of degree $n$ if and only if $\#\Mset(\Xset)= n+2.$

Now, let a set $\Lset=\{\ell_0,\ldots,\ell_n\}$ of $n+1$ lines be in general position (see Fig.\ref{Fig1}, for $n=3$).
Then the set of ${n+1\choose 2}$ intersection points $\{A_{ij}: 0\le i<j\le n\}$
of these lines together with a set of $n+1$ other noncollinear points $\{A_i\in\ell_i: 0\le i\le n\},$ is called Carnicer-Gasca set of degree $n$ (see \cite{HV}, Section 3). 
\begin{figure}
\begin{center}
\includegraphics[width=8.0cm,height=5.cm]{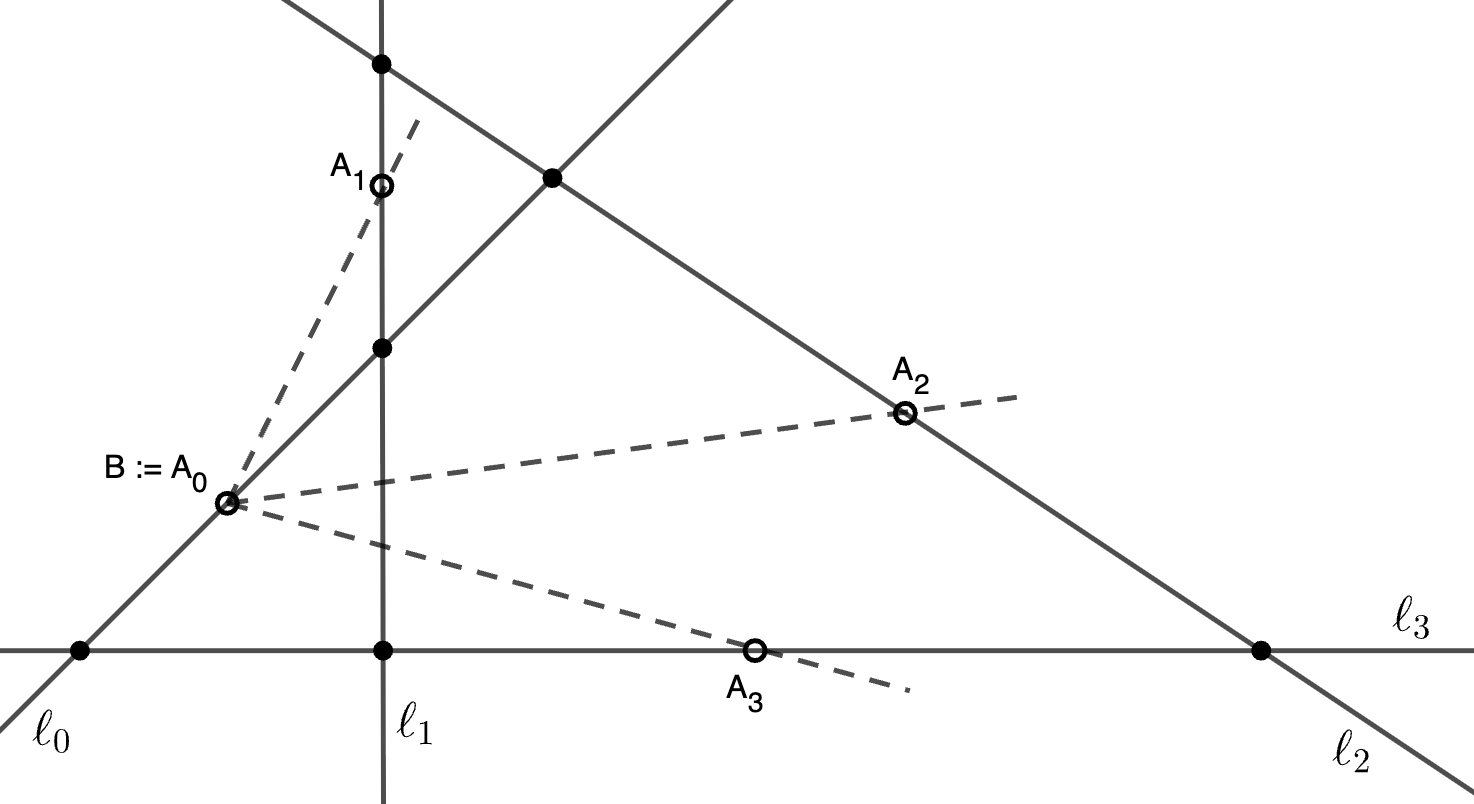}
\end{center}
\caption{Carnicer-Gasca sed of degree $3$} \label{Fig1}
\end{figure}

Denote by $\ell_{AB}$ the line between the points $A,B.$

Each node  $A_i$ here belongs to only one line: $\ell_i,$ and the product of the remaining $n$ lines gives $p^\star_{A_i}.$ Next, each node $A_{ij}$ belongs to two lines: $\ell_i$ and $\ell_j,$ and the product of the remaining $n-1$ lines and the line $\ell_{A_i,A_j}$ gives $p^\star_{A_{ij}}.$ Hence $\Xset$ is $GC_n$ set. Note that the lines $\ell_i,\ i=0,\ldots,n,$ are maximal for $\Xset.$  Thus, in Carnicer-Gasca set except the maximal lines  also the lines $\ell_{A_i,A_j}$ are used. 

It can be checked readily that  
 an $n$-correct set $\Xset$ is Carnicer-Gasca set of degree $n$ if and only if $\#\Mset(\Xset)= n+1.$

\subsection{Maximal curves}

Set $$d(n, k) := N_n - N_{n-k} =(n+1)+n+(n-1)+\cdots+(n-k+2)= (1/2) k(2n+3-k).$$

The following is a  generalization of Proposition \ref{prp:n+1ell}.
\begin{proposition}[\cite{Raf}, Prop. 3.1]\label{maxcurve}
Let $\Xset$ be an $n$-correct set and $q$ be an algebraic curve of degree $k \le n$ with no multiple components. Then, the following hold:

$(i)$ Any subset of $q$ containing more than $d(n,k)$ nodes of $\Xset$ is
$n$-dependent.

$(ii)$ Any subset of $q$ containing exactly $d(n,k)$ nodes of $\Xset$ is $n$-independent if and only if
\vspace{-.5cm}

$$\quad p\in {\Pi_{n}}\ \text{and}\ \ p|_{{\mathcal X}} = 0 \implies  p = qr,\ \hbox{where}\ r \in \Pi_{n-k}.$$
\end{proposition}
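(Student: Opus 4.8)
The plan is to reduce both parts to a single dimension count for the evaluation map on $\Pi_n$. I would let $\Yset$ denote the subset of nodes of $\Xset$ lying on $q$ that is under consideration, set $m=\#\Yset$, and introduce the evaluation map $\phi\colon\Pi_n\to\mathbb{R}^{\Yset}$, $\phi(p)=\bigl(p(A)\bigr)_{A\in\Yset}$. The basic dictionary I would rely on is that $\Yset$ is $n$-independent exactly when the point evaluations at its nodes are linearly independent on $\Pi_n$, i.e. when $\phi$ is surjective, equivalently when $\dim\ker\phi=N_n-m$; otherwise $\Yset$ is $n$-dependent. The central object I would introduce is the subspace $V:=q\,\Pi_{n-k}=\{qr:r\in\Pi_{n-k}\}\subseteq\Pi_n$, and I would record two facts about it. First, since every node of $\Yset$ lies on the zero set of $q$, each $qr$ vanishes on $\Yset$, so $V\subseteq\ker\phi$. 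Second, the map $r\mapsto qr$ is injective, because the polynomial ring is an integral domain and $q\neq0$; hence $\dim V=\dim\Pi_{n-k}=N_{n-k}$. Here the hypothesis that $q$ has no multiple components is what guarantees that its defining polynomial genuinely has degree $k$, so that $d(n,k)=N_n-N_{n-k}$ is formed with the correct value of $k$.

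For part $(i)$ I would assume $m>d(n,k)=N_n-N_{n-k}$ and simply estimate the rank: from $V\subseteq\ker\phi$ and $\dim V=N_{n-k}$ I get $\operatorname{rank}\phi=N_n-\dim\ker\phi\le N_n-N_{n-k}=d(n,k)<m$. Thus $\phi$ is not surjective, the evaluations at $\Yset$ are dependent, and $\Yset$ is $n$-dependent.

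For part $(ii)$ the hypothesis becomes $m=d(n,k)$, so now $N_n-m=N_{n-k}=\dim V$, and the whole statement turns on comparing $\ker\phi$ with $V$ while remembering the automatic inclusion $V\subseteq\ker\phi$. For the direction $(\Leftarrow)$, if every $p\in\Pi_n$ with $p|_{\Yset}=0$ has the form $p=qr$, then $\ker\phi\subseteq V$, hence $\ker\phi=V$ and $\dim\ker\phi=N_{n-k}=N_n-m$, so $\phi$ is surjective and $\Yset$ is $n$-independent. For the direction $(\Rightarrow)$, if $\Yset$ is $n$-independent then $\dim\ker\phi=N_n-m=N_{n-k}=\dim V$, and this together with $V\subseteq\ker\phi$ forces $\ker\phi=V$; that is exactly the assertion that every $p\in\Pi_n$ vanishing on $\Yset$ is divisible by $q$ with quotient in $\Pi_{n-k}$.

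Since everything is linear algebra, I do not expect a serious obstacle, and the argument as I have set it up uses only that $\Yset$ is a finite subset of the zero set of $q$ — it does not require the $n$-correctness of the ambient $\Xset$, and it specializes back to Proposition \ref{prp:n+1ell} at $k=1$. The one place that must be handled carefully — the crux — is the pair of dimension equalities $\dim(q\,\Pi_{n-k})=N_{n-k}$ and $N_n-d(n,k)=N_{n-k}$: the first is where $q$ must be a genuine degree-$k$ curve, and the coincidence of these two numbers in part $(ii)$ is precisely what upgrades the pointwise condition ``$p$ vanishes on the $d(n,k)$ nodes of $\Yset$'' into the global divisibility conclusion ``$q\mid p$''. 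Getting these two counts to match, and correctly phrasing the vanishing condition on the subset $\Yset$ rather than on all of $\Xset$, are the only points requiring attention.
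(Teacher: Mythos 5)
Your proof is correct. The paper does not actually prove this proposition --- it is imported verbatim from \cite{Raf} --- but the proof there (and of the analogous statements in the literature, e.g.\ Proposition \ref{prp:n+1ell} at $k=1$) is exactly your argument: the evaluation map $\phi:\Pi_n\to\mathbb{R}^{\Yset}$, the subspace $V=q\,\Pi_{n-k}\subseteq\ker\phi$ of dimension $N_{n-k}$ via injectivity of multiplication by $q$, and the identity $N_n-d(n,k)=N_{n-k}$ that turns the count in (ii) into the equality $\ker\phi=V$; so you have reconstructed the standard proof rather than found a new route. Two marginal remarks: you are right that the vanishing condition in (ii) must be read on the $d(n,k)$-node subset itself (the $\mathcal X$ in the displayed implication is the subset, a notational carry-over from \cite{Raf}; read as the ambient $n$-correct set the implication would hold vacuously by Proposition \ref{correctii} and the equivalence would fail). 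Also, your gloss on the ``no multiple components'' hypothesis is slightly off: a non-reduced polynomial of degree $k$ still has degree $k$, so that hypothesis is not what makes $\dim(q\,\Pi_{n-k})=N_{n-k}$; its real role is to keep the statement non-vacuous, since a $q$ with a multiple component has the same zero set as a curve of degree $<k$ and hence cannot carry $d(n,k)$ $n$-independent nodes --- as you yourself observe, your linear algebra never uses reducedness.
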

\noindent Thus at most $d(n,k)$ $n$-independent nodes lie in a curve $q$ of degree $k \le n$.
\begin{definition}\label{def:maximal}
Let $\Xset$ be an $n$-correct set of nodes. A curve of degree $k \le n$ passing through $d(n,k)$ points
of $\mathcal X$ is called maximal curve for $\Xset.$
\end{definition}
Clearly, a maximal curve cannot have a multiple component.

The following proposition gives a characterization of the maximal curves:
\begin{proposition}[\cite{Raf}, Prop. 3.3] \label{maxcor}
Let $\Xset$ be an $n$-correct set. Then
a curve $f\in\Pi$ of degree $k,\ k\le n,$ is a maximal curve for $\Xset$ if and only if 
it is used by any node of the set $\Xset\setminus f.$
\end{proposition}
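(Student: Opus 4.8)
The plan is to transfer the whole question to interpolation of degree $n-k$ on the complementary set $\Xset\setminus f$, using the bridge between $\Pi_n$ and $\Pi_{n-k}$ supplied by multiplication and division by the polynomial $f$. First I would record one elementary fact, valid for \emph{every} curve $f$ of degree $k\le n$, with no hypothesis on how $f$ is used: $\#(\Xset\setminus f)\ge N_{n-k}$. Indeed, if $\#(\Xset\setminus f)<N_{n-k}=\dim\Pi_{n-k}$, the map $\Pi_{n-k}\ni q\mapsto q|_{\Xset\setminus f}$ has nontrivial kernel, so some $0\ne q\in\Pi_{n-k}$ vanishes on $\Xset\setminus f$; then $fq\in\Pi_n$ vanishes on all of $\Xset$, whence $fq=0$ by Proposition \ref{correctii} and $q=0$, a contradiction. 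The same computation shows that as soon as $\#(\Xset\setminus f)=N_{n-k}$, the set $\Xset\setminus f$ is $(n-k)$-correct: any $q\in\Pi_{n-k}$ vanishing on it gives $fq=0$, hence $q=0$, and Proposition \ref{correctii} applies in degree $n-k$.

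For the forward implication I would assume $f$ is maximal, so $\#(\Xset\cap f)=d(n,k)$ and hence $\#(\Xset\setminus f)=N_{n-k}$; by the observation above, $\Xset\setminus f$ is then $(n-k)$-correct. Fixing $A\in\Xset\setminus f$, let $q_A\in\Pi_{n-k}$ be its $(n-k)$-fundamental polynomial relative to $\Xset\setminus f$. Then $fq_A\in\Pi_n$ vanishes on $\Xset\cap f$ (because of the factor $f$) and on $(\Xset\setminus f)\setminus\{A\}$ (because of $q_A$), while $(fq_A)(A)=f(A)q_A(A)=f(A)\ne 0$ since $A\notin f$. Thus $fq_A$ is a nonzero scalar multiple of $p^\star_A$, so $p^\star_A=f\cdot\bigl(q_A/f(A)\bigr)$ with $q_A/f(A)\in\Pi_{n-k}$, i.e. $A$ uses $f$. (Alternatively, since $\Xset\cap f$ is an $n$-independent set of exactly $d(n,k)$ nodes and $p^\star_A$ vanishes on it, this implication is immediate from Proposition \ref{maxcurve}(ii).)

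For the converse I would assume every $A\in\Xset\setminus f$ uses $f$, writing $p^\star_A=f\,r_A$ with $r_A\in\Pi_{n-k}$. Evaluating at any $B\in(\Xset\setminus f)\setminus\{A\}$ gives $f(B)r_A(B)=0$ with $f(B)\ne 0$, so $r_A(B)=0$; and $f(A)r_A(A)=1$ forces $r_A(A)\ne 0$. Hence each $r_A$ is an $(n-k)$-fundamental polynomial of $A$ in $\Xset\setminus f$, so this set is $(n-k)$-independent and $\#(\Xset\setminus f)\le N_{n-k}$. Combined with the elementary inequality $\#(\Xset\setminus f)\ge N_{n-k}$ from the first paragraph, this yields equality, so $\#(\Xset\cap f)=N_n-N_{n-k}=d(n,k)$ and $f$ is maximal.

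The argument is short because all the content sits in the single device of crossing between degrees $n$ and $n-k$ through the factor $f$; the point to watch is that the two halves of the cardinality estimate have genuinely different origins — the bound $\#(\Xset\setminus f)\ge N_{n-k}$ is automatic from $n$-correctness, whereas the matching upper bound is exactly where the ``uses'' hypothesis enters, by manufacturing the fundamental polynomials $r_A$. I expect no separate analysis of multiple components of $f$ to be needed: the equality $\#(\Xset\cap f)=d(n,k)$ is obtained directly, and the absence of multiple components then follows a posteriori from the remark after Definition \ref{def:maximal}.
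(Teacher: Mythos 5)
Your proof is correct, but it takes a genuinely different and more self-contained route than the paper. The paper gives no written-out argument: it cites Rafayelyan and remarks that the proposition ``in view of the Lagrange formula, follows directly from Proposition \ref{maxcurve}'' --- i.e., the forward direction comes from the divisibility characterization in Proposition \ref{maxcurve}(ii) applied to the fundamental polynomials $p^\star_A$, and the converse from writing any $p\in\Pi_n$ vanishing on $\Xset\cap f$ as $\sum_{A\in\Xset\setminus f}p(A)\,p^\star_A$, so that the hypothesis ``every node off $f$ uses $f$'' forces $p=fr$. You instead bypass Proposition \ref{maxcurve} and the Lagrange formula entirely, working only with Proposition \ref{correctii} and the degree-crossing map $q\mapsto fq$ between $\Pi_{n-k}$ and $\Pi_n$: the lower bound $\#(\Xset\setminus f)\ge N_{n-k}$ is automatic from $n$-correctness, the matching upper bound in the converse comes from manufacturing the $(n-k)$-fundamental polynomials $r_A$ (which gives $(n-k)$-independence of $\Xset\setminus f$), and the forward direction runs through the $(n-k)$-correctness of the complement. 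This buys two things the paper's remark does not make explicit: you never need the no-multiple-components hypothesis of Proposition \ref{maxcurve} (squarefreeness indeed follows a posteriori, as you note, since a multiple component would cap the node count below $d(n,k)$), and your intermediate observation is exactly the equivalence \eqref{maxmax} stated after the proposition, which you thus obtain as a byproduct rather than as a separate consequence. What the paper's route buys in exchange is brevity --- one line, given that Proposition \ref{maxcurve} is already available --- and the structural insight that maximality is equivalent to the ideal-theoretic statement that vanishing on $\Xset\cap f$ forces divisibility by $f$. Your parenthetical alternative for the forward direction via Proposition \ref{maxcurve}(ii) is precisely half of the paper's intended argument, so you have in effect recovered their proof as a special case of yours.
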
 

Note that this, in view of the Lagrange formula, follows directly from Proposition \ref{maxcurve}. 

One readily gets from here that for an $n$-correct set $\Xset:$ 
\begin{equation}\label{maxmax}q\in\Pi_k \hbox{ is a maximal curve}  \iff \Xset\setminus q  \hbox{ is an $(n-k)$-correct set.}\end{equation} 
One also obtains readily for  a $GC_n$ set $\Xset:$ 

If $f$ is a maximal curve of degree $k\le n,$ then $f$ is a product of $k$ distinct lines (see \cite{HVV}, Prop. 5.1). 

Next result concerns components of a maximal curve. We will use it in the special case of the line-components.
\begin{proposition}\label{maxcomp} Let $\Xset$ be an $n$-correct set  and
a curve $q$ of degree $k\le n,$ be a maximal curve. Suppose that a curve $s$ of degree $m\le k,$ is a component of $q,$ that is $q=s r, \deg r=k-m.$
Then $s$ passes through  at least $d(n-k+m,m)$ nodes of $\Xset \setminus r.$ Moreover, $s$ passes through  exactly $d(n-k+m,m)$ nodes of $\Xset \setminus r$ if and only if $r$ is a maximal curve of degree $k-m.$ 
\end{proposition}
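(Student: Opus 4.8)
The plan is to reduce the entire statement to one exact node-count together with the bound of Proposition \ref{maxcurve}(i). First I would record the decomposition of the nodes lying off $r$. Since $q=sr$ is maximal it has no multiple component, so $r$ and $s$ share no component and neither is repeated; in particular the zero set of $q$ is the union of the zero sets of $s$ and $r$. Hence every node of $\Xset$ off $r$ is either off $q$ altogether or lies on $s$, which gives the disjoint decomposition
$$\Xset\setminus r=(\Xset\cap s\setminus r)\ \sqcup\ (\Xset\setminus q).$$
Because $q$ is a maximal curve of degree $k$, relation \eqref{maxmax} makes $\Xset\setminus q$ an $(n-k)$-correct set, so $\#(\Xset\setminus q)=N_{n-k}$. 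Writing $a:=\#(\Xset\cap s\setminus r)$ for the quantity to be estimated, the decomposition gives $\#(\Xset\setminus r)=a+N_{n-k}$, and since $\#(\Xset\setminus r)=N_n-\#(\Xset\cap r)$ I obtain the exact identity
$$a=\big(N_n-N_{n-k}\big)-\#(\Xset\cap r)=d(n,k)-\#(\Xset\cap r).$$

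Next I would bound $\#(\Xset\cap r)$. As $\Xset$ is $n$-correct it is $n$-independent, hence so is its subset $\Xset\cap r$; since $r$ has degree $k-m\le n$ and no multiple component, Proposition \ref{maxcurve}(i) forces $\#(\Xset\cap r)\le d(n,k-m)$. Substituting this into the identity above and using the telescoping relation
$$d(n,k)-d(n,k-m)=\big(N_n-N_{n-k}\big)-\big(N_n-N_{n-k+m}\big)=N_{n-k+m}-N_{n-k}=d(n-k+m,m),$$
I get $a\ge d(n-k+m,m)$, which is the first assertion.

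For the second assertion I would simply track equality. From $a=d(n,k)-\#(\Xset\cap r)$ and the telescoping identity, the equality $a=d(n-k+m,m)$ holds if and only if $\#(\Xset\cap r)=d(n,k-m)$; by Definition \ref{def:maximal} this is precisely the condition that the degree-$(k-m)$ curve $r$ be a maximal curve for $\Xset$. This yields the claimed equivalence, and (as a free consequence, via \eqref{maxmax}) it identifies the equality case as the situation where $s$ is a maximal curve for the $(n-k+m)$-correct set $\Xset\setminus r$.

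The argument is mostly bookkeeping, so the one point I would treat as the main obstacle is the clean justification of the disjoint decomposition and of the applicability of Proposition \ref{maxcurve}(i) to $r$: both rest on the fact that a maximal $q$ has no multiple component, hence that $s$ and $r$ have no common component and that $r$ itself is reduced, so that passing to zero sets creates no hidden coincidences among the counted nodes. Once this is secured, the two facts $\#(\Xset\setminus q)=N_{n-k}$ and $\#(\Xset\cap r)\le d(n,k-m)$ combine mechanically to give both statements.
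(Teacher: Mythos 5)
Your proof is correct and takes essentially the same approach as the paper: both reduce everything to the exact count $d(n,k)=\#(\Xset\cap r)+\#\left(\Xset\cap s\setminus r\right)$ on the maximal curve $q=sr$, apply the $n$-independence bound $\#(\Xset\cap r)\le d(n,k-m)$ of Proposition \ref{maxcurve}, (i) (legitimate because a maximal curve, hence its component $r$, has no multiple component), and read the equality case off Definition \ref{def:maximal}. The paper merely phrases the inequality as a contradiction and counts directly on $q$ rather than through the complement $\Xset\setminus r$ and the cardinality $N_{n-k}$ of $\Xset\setminus q$; this is the same bookkeeping in different packaging.
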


\begin{proof} Assume, by way of contradiction, that   $s$ passes through  less than $d(n-k+m,m)$ nodes of $\Xset \setminus r.$
Then, since $q=s r$ passes through $d(n,k)$ nodes of $\Xset,$ we get that the curve $r, \deg r=k-m,$ passes through more than $d(n,k)-d(n-k+m,m)=d(n,k-m)$ nodes of $\Xset,$ which is contradiction.

Now, if $s$ passes through  exactly $d(n-k+m,m)$ nodes of $\Xset \setminus r$ then above calculation shows that  $r$ passes through exactly $d(n,k)-d(n-k+m,m)=d(n,k-m)$ nodes of $\Xset.$ Hence, $r$ is a maximal curve of degree $k-m.$ And, vice versa, if $r$ passes through exactly $d(n,k-m)$ nodes of $\Xset$ then we similarly get that  $s$ passes through  exactly $d(n-k+m,m)$ nodes of $\Xset \setminus r.$  
\end{proof}

In the sequel we will need the following
\begin{theorem} [\cite{HakTor}, Thm. 4.1] \label{uncurve} Assume that $\Xset$ is an n-independent set of
$d(n; k-1)+2$ nodes lying in a curve of degree $k\le n.$ Then the curve
is determined uniquely by these nodes.
\end{theorem}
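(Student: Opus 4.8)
The plan is to turn the uniqueness statement into a single dimension count inside $\Pi_n$. Set $m:=d(n,k-1)+2$ and let $U:=\{p\in\Pi_k:\ p|_\Xset=0\}$ be the space of curves of degree $\le k$ through all $m$ nodes; it suffices to prove $\dim U=1$. First I would note that no curve of degree $<k$ can pass through all of $\Xset$: since $\Xset$ is $n$-independent, a curve of degree $\kappa\le k-1\le n$ carries at most $d(n,\kappa)\le d(n,k-1)<m$ of its nodes, so it cannot contain all $m$ of them. Consequently every nonzero element of $U$ has degree exactly $k$; in particular the given curve $q$ has $\deg q=k$. Supposing $\dim U\ge 2$, I fix $q'\in U$ not proportional to $q$ and aim for a contradiction.

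The device is to pass to degree $n$. Put $W:=\{p\in\Pi_n:\ p|_\Xset=0\}$. Since $\Xset$ is $n$-independent, its $m$ evaluation functionals are linearly independent on $\Pi_n$, so $\dim W=N-m=N_{n-k+1}-2$ (using $N-d(n,k-1)=N_{n-k+1}$). Both $q\,\Pi_{n-k}$ and $q'\,\Pi_{n-k}$ lie in $W$, each of dimension $N_{n-k}$ because multiplication by a fixed nonzero polynomial is injective. Writing $j:=\deg\gcd(q,q')$, unique factorization gives $q\,\Pi_{n-k}\cap q'\,\Pi_{n-k}=\mathrm{lcm}(q,q')\,\Pi_{n-(2k-j)}$, a space of dimension $N_{n-2k+j}$ (read as $0$ when $2k-j>n$). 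As $q,q'$ are non‑proportional of the same degree $k$, we have $0\le j\le k-1$, and by inclusion–exclusion
\[
\dim\bigl(q\,\Pi_{n-k}+q'\,\Pi_{n-k}\bigr)=2N_{n-k}-N_{n-2k+j}.
\]

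To finish I would show this strictly exceeds $\dim W$, contradicting the inclusion $q\,\Pi_{n-k}+q'\,\Pi_{n-k}\subseteq W$. Using $N_{n-k+1}-N_{n-k}=n-k+2$ and $N_{n-k}-N_{n-2k+j}=d(n-k,\,k-j)$, the inequality $2N_{n-k}-N_{n-2k+j}>N_{n-k+1}-2$ collapses to the elementary estimate $d(n-k,\,k-j)>n-k$ in the case $2k-j\le n$, and to $N_{n-k}>n-k$ in the case $2k-j>n$. Both hold for every admissible $j$: writing $t=n-k$ and $p=k-j$ (so $1\le p\le t$ in the first case), one has $d(t,p)-t=(p-1)t+\tfrac12 p(3-p)\ge \tfrac12(p^2+p)>0$, while $N_{n-k}=\binom{n-k+2}{2}>n-k$ is immediate. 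This contradiction forces $q'$ proportional to $q$, so $\dim U=1$ and the degree-$k$ curve is unique. The main obstacle is the middle step: recognizing the correct ambient space $W$ and pinning down $\dim\bigl(q\,\Pi_{n-k}\cap q'\,\Pi_{n-k}\bigr)$ through the $\gcd$/$\mathrm{lcm}$; once that dimension is in hand the closing inequality is only elementary bookkeeping across the two cases $2k-j\le n$ and $2k-j>n$.
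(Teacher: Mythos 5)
The paper itself offers no proof of Theorem \ref{uncurve}: it is imported verbatim from \cite{HakTor}, so there is no in-paper argument to compare against, and your proposal has to be judged on its own merits. Judged so, it is correct, and it is a genuinely self-contained route: rather than the $n$-independence/maximal-curve machinery developed around the citation, you reduce uniqueness to $\dim U=1$ for $U=\{p\in\Pi_k:\ p|_{\Xset}=0\}$ and run a pure dimension count inside $W=\{p\in\Pi_n:\ p|_{\Xset}=0\}$, whose codimension in $\Pi_n$ is exactly $\#\Xset$ because $n$-independence makes the evaluation functionals linearly independent. The pivotal identity $q\,\Pi_{n-k}\cap q'\,\Pi_{n-k}=\mathrm{lcm}(q,q')\,\Pi_{n-(2k-j)}$ is valid since $\mathbb{R}[x,y]$ is a UFD (write $q=g q_1$, $q'=g q_1'$ with $\gcd(q_1,q_1')=1$ and divide out), and your closing inequalities check out: with $t=n-k$, $p=k-j$, $1\le p\le t$ in the case $2k-j\le n$, one has $d(t,p)-t=(p-1)t+\tfrac12 p(3-p)\ge \tfrac12 p(p+1)>0$, while $N_{n-k}>n-k$ handles $2k-j>n$ (this branch also covers $k=n$, where $t=0$). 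So $\dim\bigl(q\,\Pi_{n-k}+q'\,\Pi_{n-k}\bigr)>\dim W$, the desired contradiction, and your conclusion is in fact slightly stronger than the stated theorem: uniqueness among all curves of degree $\le k$, not only degree exactly $k$.

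Two small points should be made explicit. First, when excluding curves of degree $\kappa<k$ through all of $\Xset$, you invoke the bound $d(n,\kappa)$, but Proposition \ref{maxcurve}\,(i) is stated for curves with no multiple components; the fix is routine — replace the polynomial by its squarefree part, which has the same zero set and degree at most $\kappa$, then apply the bound and monotonicity of $d(n,\cdot)$. Second, your estimate $j\le k-1$ and the computation $\deg\mathrm{lcm}(q,q')=2k-j$ both use that $q'$ has degree exactly $k$; this is precisely what the first step guarantees, but the dependence should be stated, since for $\deg q'<k$ the lcm degree changes. With these glosses the argument is complete and arguably more elementary than the source it replaces, at the price of proving only this theorem rather than the sharper family of results (on $d(n,k-m)+m$ nodes admitting up to $2m$ curves) that the techniques of \cite{HakTor} are built to deliver.
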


\subsection{Special triplets}

Let us start with the following
\begin{definition}
Let $\Xset$ be an $n$-correct set of nodes. A triple of nodes $\{A,B,C\}\subset\Xset$ is called a \emph{special triplet} if there exists a maximal curve $f$ of degree $n-1$ such that 
\begin{equation} \label{020} \Xset\setminus f =\{A,B,C\}.\end{equation}
\end{definition}
We call above $f$ the maximal curve associated with $\{A,B,C\}.$

An example of a special triplet in a $GC_5$ set  is presented in Fig.\ref{Fig2}. The associated maximal curve in this case is:
$f(x,y)=xy(y-1)(x+y-5).$

An example of a nonspecial triplet in a $GC_5$ set  is presented in Fig.\ref{Fig3}. Indeed, assume by way of contradiction, that the triplet is special and $f$ is the associated maximal curve. Then, in view of Proposition \ref{prp:n+1ell}, we get that $f(x,y)=xy(x+y-5)\ell(x,y),$ where $\ell\in\Pi_1.$ Clearly we obtain also that $\ell$ vanishes at $(1,2), (2,1)$ and $(2,2).$ Therefore $\ell=0$ and $f=0,$ which is  contradiction. 

\begin{figure}
     \centering
     \begin{subfigure}[b]{0.3\textwidth}
         \centering
         \includegraphics[width=\textwidth]{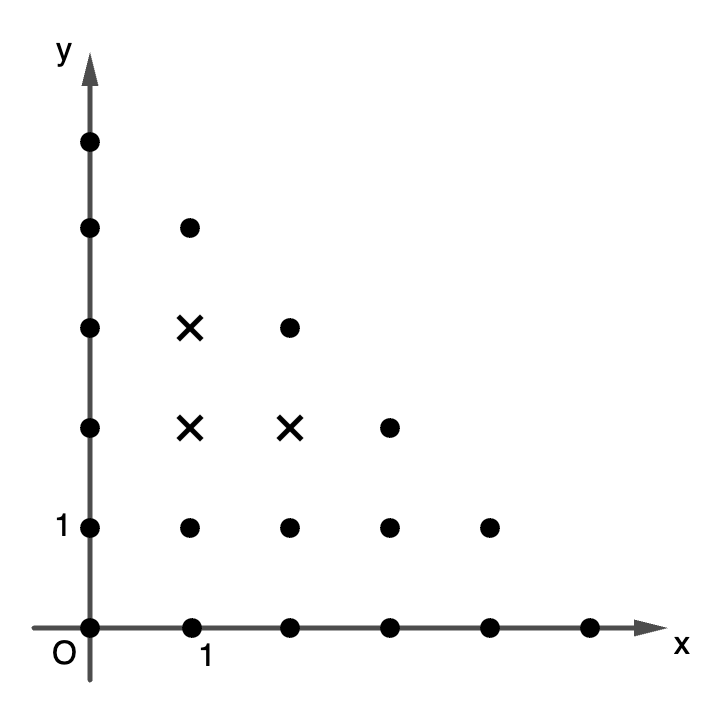}
         \caption{A special triplet}
         \label{Fig2}
     \end{subfigure}
     \hfill
     \begin{subfigure}[b]{0.3\textwidth}
         \centering
         \includegraphics[width=\textwidth]{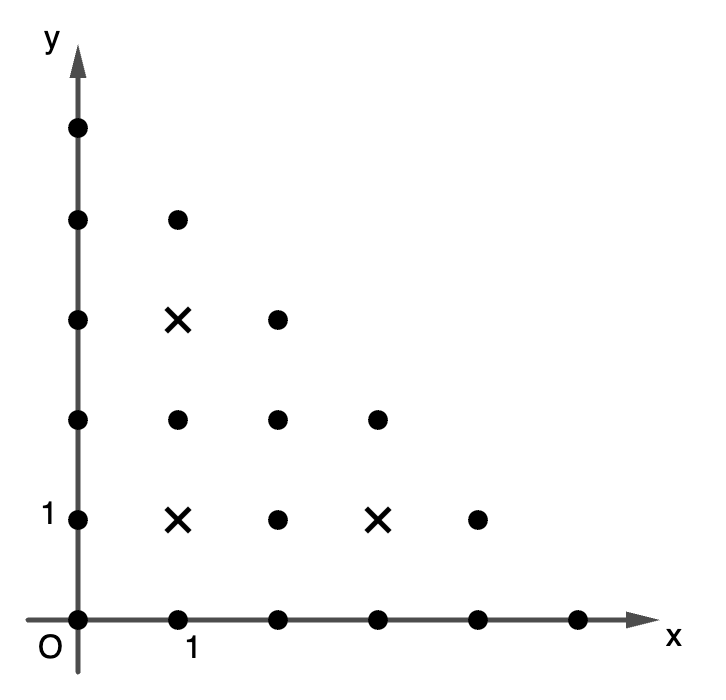}
         \caption{A nonspecial tripliet}
         \label{Fig3}
     \end{subfigure}
          \caption{Two triplets in a $GC_5$ set}
        \label{fig2}
\end{figure}

\begin{proposition} \label{sptr} Let $\Xset$ be an $n$-correct set of nodes and $\{A,B,C\}\subset\Xset$ be a special triplet. Then the nodes $A,\ B,$ and $C$ are noncollinear and use the lines $\ell_{BC},\ \ell_{CA},$ and $\ell_{AB},$ respectively.
\end{proposition}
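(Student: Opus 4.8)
The plan is to proceed in two stages: first establish that $A,B,C$ are noncollinear, and then exhibit the used lines explicitly by writing down the relevant fundamental polynomials in closed form.

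For the noncollinearity I would invoke the characterization \eqref{maxmax}. Since $f$ is a maximal curve of degree $k=n-1$, the set $\Xset\setminus f=\{A,B,C\}$ is an $(n-k)=1$-correct set. A $1$-correct set consists of $N_1=3$ nodes admitting unique interpolation from $\Pi_1$; were $A,B,C$ collinear on some line $m$, then the nonzero linear polynomial $m$ would vanish on all three nodes, contradicting $1$-correctness via Proposition \ref{correctii} (applied at degree $1$). Hence $A,B,C$ are noncollinear, and in particular $A\notin\ell_{BC}$, $B\notin\ell_{CA}$, and $C\notin\ell_{AB}$.

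For the second stage, consider the polynomial $\ell_{BC}\,f$. Its degree is $1+(n-1)=n$, so it lies in $\Pi_n$. It vanishes at every node of $\Xset$ lying on $f$ (because $f$ does), and it vanishes at $B$ and $C$ (because $\ell_{BC}$ does); as $\Xset=(\Xset\cap f)\cup\{A,B,C\}$, this means $\ell_{BC}\,f$ vanishes on all of $\Xset\setminus\{A\}$. On the other hand $f(A)\neq 0$ since $A\notin f$, and $\ell_{BC}(A)\neq 0$ by the noncollinearity just proved; hence $(\ell_{BC}\,f)(A)\neq 0$. Because $\Xset$ is $n$-correct, the fundamental polynomial $p^\star_A$ is the unique element of $\Pi_n$, up to a nonzero scalar, that vanishes on $\Xset\setminus\{A\}$ and is nonzero at $A$ (any two such polynomials, normalized to value $1$ at $A$, differ by a polynomial vanishing on all of $\Xset$, which is $0$). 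Therefore $p^\star_A$ is a nonzero constant multiple of $\ell_{BC}\,f$, so $\ell_{BC}$ is a component of $p^\star_A$, i.e.\ $A$ uses $\ell_{BC}$. Interchanging the roles of the three nodes gives that $B$ uses $\ell_{CA}$ and $C$ uses $\ell_{AB}$.

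I do not expect any step to present a serious obstacle. The only points needing a little care are the justification of noncollinearity, for which the correctness reformulation \eqref{maxmax} is exactly the right tool, and the uniqueness of the fundamental polynomial, which is immediate from $n$-correctness. The construction $\ell_{BC}\,f$ in effect writes the fundamental polynomial of $A$ explicitly, after which the argument reduces to a degree count together with the nonvanishing check at $A$.
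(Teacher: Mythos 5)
Your proof is correct and follows essentially the same route as the paper: the paper also obtains the used lines from the identities $p^\star_{A,\Xset}=\ell_{BC}f$, $p^\star_{B,\Xset}=\ell_{CA}f$, $p^\star_{C,\Xset}=\ell_{AB}f$, which you simply spell out in full detail (vanishing on $\Xset\setminus\{A\}$, nonvanishing at $A$, uniqueness of the fundamental polynomial). The only cosmetic difference is the noncollinearity step: the paper argues directly that if $A,B,C$ lay on a line $\ell$, then $\ell f\in\Pi_n$ would vanish on all of $\Xset$, contradicting Proposition~\ref{correctii}, whereas you route the same vanishing-polynomial idea through \eqref{maxmax} and the $1$-correctness of $\{A,B,C\}$ --- both are valid one-line arguments.
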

\begin{proof} Let $f$ be the associated with $\{A,B,C\}$ maximal curve. Assume, by way of contradiction, that $A,B,C$ are collinear, i.e., belong to some line $\ell.$ Then the polynomial $\ell f\in\Pi_n$ vanishes on $\Xset,$ which contradicts Proposition \ref{correctii}. Next, we readily obtain that
\begin{equation}\label{ABC} p^\star_{A,\Xset}=\ell_{BC} f, \ \ p^\star_{B,\Xset}=\ell_{CA} f,\ \ \hbox{and} \ \  p^\star_{C,\Xset}=\ell_{AB} f.\end{equation}
\end{proof}
Note that the triplet in Fig.\ref{Fig2} satisfies the condition \eqref{ABC}, though it is not a special triplet.

\begin{proposition} \label{sptr2} Let $\Xset$ be an $n$-correct set of nodes. Let also $\{A,B,C_1\}$ and $\{A,B,C_2\}$ be special triplets, where $A,B,C_1,C_2 \in\Xset$. Then $C_1=C_2.$
\end{proposition}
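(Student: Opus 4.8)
The plan is to argue by contradiction: assume $C_1\neq C_2$ and show that the two associated maximal curves are forced to coincide, which is impossible since they have different complements in $\Xset$. Let $f_1$ and $f_2$ denote the maximal curves of degree $n-1$ associated with the triplets $\{A,B,C_1\}$ and $\{A,B,C_2\}$, so that $\Xset\setminus f_1=\{A,B,C_1\}$ and $\Xset\setminus f_2=\{A,B,C_2\}$. Each of $f_1,f_2$ therefore passes through exactly $d(n,n-1)=N-3$ nodes of $\Xset$, omitting only the respective triplet.

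First I would isolate the common nodes $\Yset:=\Xset\setminus\{A,B,C_1,C_2\}$. Since $A,B,C_1,C_2$ are four distinct nodes (recall $C_1,C_2\notin\{A,B\}$ by the definition of a triplet, and $C_1\neq C_2$ by the contradiction hypothesis), we have $\#\Yset=N-4$. Because $f_1$ omits only $A,B,C_1$ and $f_2$ omits only $A,B,C_2$, every node of $\Yset$ lies on both curves. Moreover $\Yset$ is $n$-independent, being a subset of the $n$-correct, hence $n$-independent, set $\Xset$: for any $D\in\Yset$ the fundamental polynomial $p^\star_{D,\Xset}$ vanishes on $\Xset\setminus\{D\}\supseteq\Yset\setminus\{D\}$ and is nonzero at $D$, so it serves as a fundamental polynomial for $D$ relative to $\Yset$.

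The crucial step is the uniqueness of the interpolating curve. Taking $k=n-1$, I would verify the arithmetic identity $N-4=d(n,n-2)+2=d(n,k-1)+2$, so that $\Yset$ is an $n$-independent set of exactly $d(n,k-1)+2$ nodes lying on the degree-$(n-1)$ curve $f_1$. Theorem \ref{uncurve} then forces the curve of degree $n-1$ through $\Yset$ to be unique; since both $f_1$ and $f_2$ qualify, we conclude $f_1=f_2$. But then $\{A,B,C_1\}=\Xset\setminus f_1=\Xset\setminus f_2=\{A,B,C_2\}$, which yields $C_1=C_2$ and contradicts our assumption, completing the proof.

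I expect the only delicate points to be bookkeeping rather than conceptual: confirming that the node count $\#\Yset=N-4$ matches $d(n,k-1)+2$ exactly (so that Theorem \ref{uncurve} applies with no slack) and checking the $n$-independence of $\Yset$. The degenerate case $n=1$, where $N=3$ and $\Yset$ would be empty, should be handled separately, but there $C_1$ and $C_2$ are each forced to be the unique node of $\Xset$ distinct from $A$ and $B$, so the claim is immediate. As a robustness check, one can alternatively reach the same contradiction through the factorizations in \eqref{ABC}: comparing the two expressions for $p^\star_{A,\Xset}$ and for $p^\star_{B,\Xset}$ leads, after cancelling the common degree-$(n-2)$ factor, to an equality of line pairs whose only solutions force $A,B,C_1$ (or $A,B,C_2$) to be collinear, contradicting Proposition \ref{sptr}.
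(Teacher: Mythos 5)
Your proof is correct and takes essentially the same route as the paper: both arguments apply Theorem \ref{uncurve} to the $N-4$ common nodes of $\Xset\setminus\{A,B,C_1,C_2\}$, through which both associated maximal curves $f_1,f_2$ of degree $n-1$ pass, and use the identity $d(n,n-2)+2=N-4$ to conclude $f_1=f_2$ and hence $C_1=C_2$. Your additional bookkeeping (verifying the $n$-independence of the subset and treating the degenerate case $n=1$) is sound; the paper simply leaves these points implicit.
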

\begin{proof} Suppose that $f_i\in\Pi_{n-1}$ be the maximal curve associated with $\{A,B,C_i\},$  $\ i=1,2.$ Then $f_1$ and $f_2$ pass through $N-4$ nodes of the set
$\Xset\setminus \{A,B,C_1,C_2\}.$ From here, in view of Theorem \ref{uncurve}, we conclude that $f_1= f_2$ and therefore $C_1=C_2,$ since $d(n,n-2)+2=N-4.$

\end{proof}
\begin{lemma}\label{cor:2ket}
Let $\Xset$ be an $n$-correct set and $\ell_{AB}$ be a $2$-node line used by a node $C,$ where $A, B,C\in\Xset.$ Then $\{A,B,C\}$ is a special triplet. 
In this case, we say that the special triplet $\{A, B, C\}$ arises from the 2-node line $\ell_{AB}.$
\end{lemma}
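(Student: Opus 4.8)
The plan is to exhibit the maximal curve associated with $\{A,B,C\}$ explicitly as the cofactor of $\ell_{AB}$ in the fundamental polynomial of $C$. Since $C$ uses $\ell_{AB}$, by the definition of "uses" we may write $p^\star_C=\ell_{AB}\,f$ with $f\in\Pi_{n-1}$. First I would record that $A,B,C$ are pairwise distinct: evaluating at $C$ gives $\ell_{AB}(C)f(C)=p^\star_C(C)=1\neq 0$, so $C\notin\ell_{AB}$ and in particular $C\neq A,B$; and $A\neq B$ because $\ell_{AB}$ is a $2$-node line.

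Next I would locate the zeros of $f$ among the nodes. For every node $D\in\Xset\setminus\{C\}$ we have $\ell_{AB}(D)f(D)=p^\star_C(D)=0$. Since $\ell_{AB}$ is a $2$-node line passing through exactly $A$ and $B$, we have $\ell_{AB}(D)\neq 0$ whenever $D\neq A,B$, which forces $f(D)=0$. Hence $f$ vanishes on all $N-3$ nodes of $\Xset\setminus\{A,B,C\}$, while $f(C)\neq 0$. Recalling that $d(n,n-1)=N_n-N_1=N-3$, the curve $f$ already passes through $d(n,n-1)$ nodes.

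The crux is to show that $f$ passes through no further node, i.e. $f(A)\neq 0$ and $f(B)\neq 0$; this is also what will pin down $\deg f=n-1$. Suppose, to the contrary, $f(A)=0$. Then $f$ vanishes on the $N-2$ nodes of $\Xset\setminus\{B,C\}$. Here I would pass to the square-free part $\bar f$ of $f$ (same zero set, no multiple component, $\deg\bar f\le n-1$) and invoke Proposition \ref{maxcurve}$(i)$: since $N-2>N-3=d(n,n-1)\ge d(n,\deg\bar f)$, these $N-2$ nodes would be $n$-dependent, contradicting the $n$-independence of every subset of the $n$-correct set $\Xset$. Thus $f(A)\neq 0$, and symmetrically $f(B)\neq 0$. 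The one delicate point in this step is the possible presence of a multiple component in $f$, which is exactly why the reduction to $\bar f$ is needed before Proposition \ref{maxcurve} becomes applicable.

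Finally I would assemble the conclusion: $f$ vanishes exactly on $\Xset\setminus\{A,B,C\}$, so $\Xset\setminus f=\{A,B,C\}$ and $f$ passes through precisely $d(n,n-1)$ nodes. The same square-free-part argument rules out $\deg f<n-1$ (which would again place more than $d(n,\deg\bar f)$ $n$-independent nodes on $\bar f$), so $\deg f=n-1$ and $f$ carries no multiple component. By Definition \ref{def:maximal}, $f$ is therefore a maximal curve of degree $n-1$ with $\Xset\setminus f=\{A,B,C\}$, which is exactly the assertion that $\{A,B,C\}$ is a special triplet.
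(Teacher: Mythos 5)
Your proposal is correct and takes essentially the same route as the paper: both factor $p^\star_{C,\Xset}=\ell_{AB}\,f$ with $f\in\Pi_{n-1}$ and identify $f$ as the maximal curve associated with $\{A,B,C\}$ via $\Xset\setminus f=\{A,B,C\}$. The paper compresses this into one sentence ("since $\ell_{AB}$ is a $2$-node line we conclude that \eqref{020} takes place"), whereas you make explicit the verifications it leaves implicit — that $f(A),f(B)\neq 0$, that $\deg f$ is exactly $n-1$, and the passage to the square-free part before invoking Proposition \ref{maxcurve}$(i)$ — all of which are sound.
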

\begin{proof}
The node $C$ uses the  line $\ell_{AB}$ means that
\begin{equation}\label{01}p^\star_{C,\Xset}=\ell_{AB} f,\ \hbox{where}\  f\in\Pi_{n-1}.
\end{equation}
Since $\ell_{AB}$ is a $2$-node line we conclude from here that the relation \eqref{020} takes place. Therefore $\{A,B,C\}$ is a special triplet and $f$ is the associated maximal curve of degree $n-1$.
\end{proof}
Note that the triplet in Fig.\ref{Fig2} is not arisen from $2$-node line. Indeed, the lines through the vertices of the triplet are $5,5$ and $4$-node lines.

Next, we obtain the following result of Carnicer and Gasca:
\begin{corollary}[\cite{CG03}, Prop. 4.2] \label{2nodeuse} Let $\Xset$ be an $n$-correct set. Then any $2$-node line can be used by at most one node.
\end{corollary}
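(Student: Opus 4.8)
The plan is to derive the corollary directly from the two results just established, Lemma~\ref{cor:2ket} and Proposition~\ref{sptr2}, by a short contradiction argument. Suppose, to the contrary, that a $2$-node line $\ell_{AB}$, with $A,B\in\Xset$ the only two nodes it contains, is used by two \emph{distinct} nodes $C_1,C_2\in\Xset$. Before invoking the lemmas I would first record that any node $C$ using $\ell_{AB}$ is automatically different from $A$ and $B$: writing $p^\star_{C}=\ell_{AB}f$ with $f\in\Pi_{n-1}$, the normalization $p^\star_{C}(C)=1\neq 0$ forces $\ell_{AB}(C)\neq 0$, so $C$ does not lie on $\ell_{AB}$ and in particular $C\notin\{A,B\}$. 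Thus $\{A,B,C_1\}$ and $\{A,B,C_2\}$ are genuine three-element subsets of $\Xset$.

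With this in hand the argument is immediate. By Lemma~\ref{cor:2ket}, since $\ell_{AB}$ is a $2$-node line used by $C_1$, the triple $\{A,B,C_1\}$ is a special triplet; likewise $\{A,B,C_2\}$ is a special triplet because $C_2$ also uses $\ell_{AB}$. Now these are two special triplets sharing the common pair $\{A,B\}$, so Proposition~\ref{sptr2} applies and gives $C_1=C_2$. This contradicts the assumption that $C_1$ and $C_2$ are distinct, and hence at most one node can use $\ell_{AB}$.

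Since essentially all the real content has been packaged into the preceding lemma and proposition, there is no serious obstacle here; the only points that need care are the bookkeeping ones. Concretely, one must use that $\ell_{AB}$ is a $2$-node line — this is exactly what lets Lemma~\ref{cor:2ket} conclude $\Xset\setminus f=\{A,B,C_i\}$ and thereby upgrade ``$C_i$ uses $\ell_{AB}$'' to ``$\{A,B,C_i\}$ is a special triplet'' — and one must confirm, as above, that $C_1,C_2\notin\{A,B\}$ so that the two triplets are well formed and Proposition~\ref{sptr2} is genuinely applicable. If the line contained more than two nodes these steps would break down, which is consistent with the corollary being stated only for $2$-node lines.
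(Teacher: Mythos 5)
Your proof is correct and is essentially identical to the paper's own argument: apply Lemma~\ref{cor:2ket} to get two special triplets $\{A,B,C_1\}$ and $\{A,B,C_2\}$, then invoke Proposition~\ref{sptr2} to conclude $C_1=C_2$. Your additional check that $C_1,C_2\notin\{A,B\}$ (since a node using $\ell_{AB}$ cannot lie on it) is a small bookkeeping point the paper leaves implicit, and is a welcome clarification.
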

Indeed, assume that a $2$ node line $\ell_{AB}$ is used by two nodes $C_1,C_2\in\Xset.$ Then, by Lemma \ref{cor:2ket}, we get that $\{A,B,C_1\}$ and $\{A,B,C_2\}$ are special triplets. Now, by  Proposition \ref{sptr2}, we obtain that $C_1=C_2.$

Let $\Xset$ be an $n$-correct set.
One of the main problems we study in this paper is to determine the maximum possible number of used $2$-node lines that share a common node $B \in \mathcal{X}$.

We show that this number equals $n$. Furthermore, if there are exactly $n$ such $2$-node lines, then $\mathcal{X}$ contains precisely $n$ maximal lines not passing through the common node $B$. Moreover, if $\mathcal{X}$ is $GC_n$ set, then there exists an additional maximal line passing through $B$. Hence, in this case, $\mathcal{X}$ has $n+1$ maximal lines and is Carnicer–Gasca set of degree $n$. Finally, note that Carnicer–Gasca sets of degree $n$ with a prescribed set of $n$ used $2$-node lines can be readily constructed.

Indeed, suppose $\mathcal{X}$ is Carnicer–Gasca set of degree $n$ with  maximal lines $\ell_0,\ldots,\ell_n.$ Thus the intersection nodes $A_{ij}$  are determined as described earlier. Then let us choose the other nodes $A_i.$ Consider a node $B := A_0 \in \ell_0.$ Choose the remaining nodes $A_i \in \ell_i,\ i=1,\ldots,n,$ so that the $n$ lines $\ell_{A_iB}$ are $2$-node lines (see Fig. \ref{Fig1} for $n=3$). 
By the earlier observation, these lines are used by the nodes $C_i:=A_{0i},\ i=1,\ldots,n,$ respectively. Thus we have exactly $n$ used $2$-node lines that share the node $B\in\Xset.$ Note that the maximal lines not passing through the node $B$ are the lines $\ell_i\equiv\ell_{A_iC_i},\ i=1,\ldots,n.$ And the $(n+1)$-th maximal line, passing through $B,$ is the line $\ell_0\equiv\ell_{BC_1}$ to whom ($B$ and) the above nodes $C_i$ belong.

\section{Main results}
Now, we are in a position to present a main result.
\begin{theorem}\label{th1sptr}
Let $\Xset$ be an $n$-correct set and $\mu:=\mu_k$ be a maximal curve of degree $k.$ Hence the set $\Bset:=\Bset_{n-k}=\Xset\setminus \mu$ is $(n-k)$-correct set. Denote by $\Aset:=\Aset_{k}=\Xset\cap \mu.$\\ Then a triplet of nodes $\{A,B,C\},$ where $A \in \Aset,\ B\in \Bset,$ and $C \in \Xset,$ is a special triplet if and only if the following two statements take place.
\begin{enumerate}
\setlength{\itemsep}{0mm}
\item
The line $\ell_1:=\ell_{AC}$ is a component of $\mu:\ \mu=\ell_1\mu_{k-1},$ in particular $B\notin \ell_1$ and $C\in \Aset.$  
\item
The set $\dot{\bar \ell}_1:=\{M\in{\ell_1}\cap  \Xset: \mu_{k-1}(M)\neq 0\}$ besides the nodes $A$ and $C,$ contains exactly $n-k$ more nodes of $\Xset,$ which 
coincide with the zeroes of $p^\star_{B,\Bset}\in\Pi_{n-k}$ on the line $\ell_1.$ Hence the latter zeroes are all real and distinct.
\end{enumerate}
Moreover, the statements \emph{(i) and (ii)} imply the following:
\begin{enumerate}
\setcounter{enumi}{2}
\setlength{\itemsep}{0mm}
\item
The curve $\mu_{k-1}:=\frac{1}{\ell_1}\ \mu$ is a maximal curve of degree $k-1.$ 

\item
The set $\Bset_{n-k+1}:=\Xset\setminus\mu_{k-1}=\Bset\cup \dot{\bar \ell}_1$ is an  $(n-k+1)$-correct set, for which $\ell_1$ is a maximal line.  
\item
$\Aset_{k-1}:=\Xset\cap\mu_{k-1}=\Aset\setminus \dot{\bar \ell}_1.$
\item 
$\{A,B,C\}$ is a special triplet in $\Bset_{n-k+1}$ too.
 
\item
If $\{B,D,E\}\subset\Xset$  is a special triplet, where $D\in \dot{\bar \ell}_1,$ then $\{B,D,E\}=\{A,B,C\},$ i.e., $\{D,E\}=\{A,C\}.$
\end{enumerate}
\end{theorem}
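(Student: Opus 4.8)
The plan is to reduce the whole theorem to a single factorization identity and then read off the statements by unique factorization and node counting. The starting observation is that, since $\mu$ is a maximal curve, Proposition \ref{maxcor} shows $\mu$ is used by every node of $\Bset=\Xset\setminus\mu$; in particular $p^\star_{B,\Xset}=c\,\mu\, p^\star_{B,\Bset}$ for a nonzero constant $c$, because $\mu\,p^\star_{B,\Bset}\in\Pi_n$ vanishes on $\Xset\setminus\{B\}$ and is nonzero at $B$. For the forward implication I would assume $\{A,B,C\}$ is special with associated curve $f\in\Pi_{n-1}$ and invoke Proposition \ref{sptr} to get $p^\star_{B,\Xset}=\ell_{CA}\,f$. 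Comparing the two expressions gives the key identity
\begin{equation*}
\ell_{CA}\,f=c\,\mu\,p^\star_{B,\Bset}.
\end{equation*}
As $\ell_{CA}$ is irreducible, if $\ell_{CA}\nmid\mu$ then $\ell_{CA}\mid p^\star_{B,\Bset}$, whence $\mu\mid f$ and so $f(A)=0$ (since $A\in\mu$), contradicting $f(A)\neq0$. Hence $\ell_{CA}=\ell_1$ is a component of $\mu$, $\mu=\ell_1\mu_{k-1}$, and since $A,C\in\ell_1\subset\mu$ while $B\in\Bset$ we obtain $C\in\Aset$ and $B\notin\ell_1$, which is (i); cancelling $\ell_1$ leaves $f=c\,\mu_{k-1}\,p^\star_{B,\Bset}$.

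For (ii) I would count the nodes of $\ell_1$ in two ways. From $f(A),f(C)\neq0$ and this factorization, $\mu_{k-1}$ and $p^\star_{B,\Bset}$ are nonzero at $A$ and $C$, so $A,C\in\dot{\bar\ell}_1$ and $\ell_1\nmid p^\star_{B,\Bset}$; thus $p^\star_{B,\Bset}$ restricts to a nonzero univariate polynomial of degree $\le n-k$ on $\ell_1$, with at most $n-k$ zeros. Any further node $M\in\dot{\bar\ell}_1\setminus\{A,C\}$ lies in $\Xset\setminus\{A,B,C\}$, so $f(M)=0$, and since $\mu_{k-1}(M)\neq0$ this forces $p^\star_{B,\Bset}(M)=0$; hence $\#\dot{\bar\ell}_1\le n-k+2$. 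The matching lower bound comes from Proposition \ref{maxcomp} applied to the line-component $\ell_1$ of $\mu$: $\dot{\bar\ell}_1$ is exactly the set of nodes of $\Xset\setminus\mu_{k-1}$ lying on $\ell_1$, and $d(n-k+1,1)=n-k+2$, so $\#\dot{\bar\ell}_1\ge n-k+2$. Equality shows there are exactly $n-k$ further nodes, that they coincide with the zeros of $p^\star_{B,\Bset}$ on $\ell_1$, and that these are real and distinct (a degree-$(n-k)$ univariate polynomial possessing $n-k$ distinct roots).

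For the converse I would set $f:=\mu_{k-1}\,p^\star_{B,\Bset}\in\Pi_{n-1}$ and verify, using the disjoint decomposition $\Aset=\Aset_{k-1}\sqcup\dot{\bar\ell}_1$ together with $\Bset=\Xset\setminus\mu$, that (i)--(ii) make $f$ vanish on $\Xset\setminus\{A,B,C\}$ and stay nonzero at $A,B,C$; as $f$ then passes through $N-3=d(n,n-1)$ nodes it is a maximal curve and $\{A,B,C\}$ is special. The consequences follow quickly. Since (ii) gives $\#\dot{\bar\ell}_1=n-k+2$, the equality form of Proposition \ref{maxcomp} yields (iii), that $\mu_{k-1}$ is a maximal curve of degree $k-1$; then \eqref{maxmax} gives (iv), that $\Bset_{n-k+1}=\Xset\setminus\mu_{k-1}=\Bset\cup\dot{\bar\ell}_1$ is $(n-k+1)$-correct, with $\ell_1$ meeting it in exactly the $n-k+2$ nodes of $\dot{\bar\ell}_1$ and hence maximal. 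Statement (v) is the disjoint-union identity $\Aset_{k-1}=\Aset\setminus\dot{\bar\ell}_1$, and (vi) follows by checking that the same $p^\star_{B,\Bset}$ is a maximal curve of degree $n-k$ in $\Bset_{n-k+1}$ whose complement there is $\{A,B,C\}$.

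Finally, for (vii), suppose $\{B,D,E\}$ is special with $D\in\dot{\bar\ell}_1\subset\Aset$. I would apply the already-proved forward implication to this triplet, with $D$ in the role of $A$: its associated curve has the form $c'\mu'_{k-1}\,p^\star_{B,\Bset}$ and is nonzero at $D$, so $p^\star_{B,\Bset}(D)\neq0$. But the nodes of $\dot{\bar\ell}_1$ other than $A$ and $C$ are precisely the zeros of $p^\star_{B,\Bset}$ on $\ell_1$, so $D\in\{A,C\}$. Then $\{A,B,C\}$ and $\{B,D,E\}$ are two special triplets sharing the pair $\{A,B\}$ (or $\{B,C\}$), and Proposition \ref{sptr2} forces $\{D,E\}=\{A,C\}$. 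I expect the genuine obstacle to be the two-sided count in (ii): the lower bound must be extracted from Proposition \ref{maxcomp} in its sharp form, while the upper bound needs both $\ell_1\nmid p^\star_{B,\Bset}$ and the fact that every extra node of $\dot{\bar\ell}_1$ is a zero of $p^\star_{B,\Bset}$; reconciling these is exactly what simultaneously delivers (ii) and, via the equality case, (iii).
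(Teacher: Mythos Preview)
Your proposal is correct and follows essentially the same approach as the paper: both start from the factorization $p^\star_{B,\Xset}=\ell_{AC}f=c\,\mu\,p^\star_{B,\Bset}$, rule out $\ell_{AC}\mid p^\star_{B,\Bset}$ via $f(A)\neq 0$, then obtain (ii) by pairing the upper bound (zeros of $p^\star_{B,\Bset}$ on $\ell_1$) with the lower bound from Proposition~\ref{maxcomp}, and derive (iii)--(vii) and the converse from the equality case and \eqref{maxmax} in the same order. Your treatment of (vii), re-invoking the forward implication to force $p^\star_{B,\Bset}(D)\neq 0$, is exactly what the paper does (it states this as ``a consequence of (ii)'' without spelling out the re-application).
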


\begin{proof}
We have that
\begin{equation}\label{222}p^\star_{B,\Xset}=p^\star_{B,\Bset}\ \mu.
\end{equation}
On the other hand, Lemma \ref{sptr} implies that
\begin{equation}\label{202}p^\star_{B,\Xset}=\ell_1 \tilde\mu,
\end{equation}
where $\tilde\mu:=\tilde\mu_{n-1}$ is the  maximal curve of degree $n-1$ associated with $\{A,B,C\}:$
\begin{equation}\label{0207}\Xset\setminus \tilde\mu=\{A,B,C\}.\end{equation}
In view of \eqref{222} and \eqref{202} the line $\ell_1$ is either a component of $\mu$  or a component of  $p^\star_{B,\Bset}.$ In the latter case we readily conclude that $\mu$ divides $\tilde\mu,$ that is $\tilde\mu=q\mu,$ where
$q\in\Pi_{n-k-1}.$ Therefore, we obtain that 
$$\tilde\mu(A)=q(A)\mu(A)=0,$$  
which contradicts \eqref{0207}.

Thus $\ell_1=\ell_{AC}$ is a component of $\mu_k$ and (i) is proved.

Now denote $\mu_{k-1}:=\frac{1}{\ell_1}\ \mu_k,\ \mu_{k-1}\in\Pi_{k-1}.$ From the relations \eqref{222} and \eqref{202} we obtain that 
\begin{equation}\label{030}\tilde\mu=p^\star_{B,\Bset}\ \mu_{k-1}.\end{equation}

Now, let us consider the set of nodes  given in (ii):
$$\dot{\bar{\ell}}_1:=\{M\in{\ell_1}\cap  \Xset: \mu_{k-1}(M)\neq 0\}.$$

We have that ${\dot{\bar{\ell}}}_1\subset \ell_1\cap\Xset.$ Let us verify that $A,C\in \dot{\bar{\ell}}_1.$ 

Indeed, in view of \eqref{0207}, we have that $\tilde\mu$ does not vanish on $A$ and $C.$ Therefore  we obtain from \eqref{030} that $\mu_{k-1}$ does not vanish either.

Now,  \eqref{030} implies that besides $A$ and $C$ the set $\dot{\bar{\ell}}_1$ may contain only zeroes of 
the polynomial $p^\star_{B,\Bset}\in\Pi_{n-k}$ on the line $\ell_1,$ hence, at most $n-k$ nodes.

On the other hand the line $\ell_1$ is a component of the maximal curve $\mu_k$ of degree $k.$ Therefore, according to Proposition \ref{maxcomp}, with $m=1,$ besides $A$ and $C$ the set $\dot{\bar{\ell}}_1$ must have at least $d(n-k+m,m)-2=n-k+2-2=n-k$ nodes from $\Xset.$ 
Thus we conclude that the polynomial $p^\star_{B,\Bset}$ has exactly $n-k$ distinct real zeroes on the line $\ell,$ different from $A$ and $C,$ and all they belong to $\dot{\bar{\ell}}_1.$ Thus the statement (ii) is proved.

From here, in view of Proposition \ref{maxcomp}, with $m=1,$ we obtain that the curve $\mu_{k-1}$ is a maximal curve of degree $k-1.$ Then, the relation \eqref{maxmax} implies that $\Bset_{n-k+1}$ is an  $(n-k+1)$-correct set, for which thus $\ell_1$ is a maximal line. The remaining equalities in (iv) and (v) follow from the definitions of $\mu_{k-1}$ and ${\dot{\bar{\ell}}}_1.$  Hence, the statements (i) and (ii) imply (iii), (iv) and (v). 

To verify the statement (vi) it suffices to prove that $p^\star_{B,\Bset}\in\Pi_{n-k}$ is the 
associated maximal curve of the triplet $\{A,B,C\}:$ 
\begin{equation}\label{1.2.3}B_{n-k+1}\setminus p^\star_{B,\Bset}=\{A,B,C\}.
\end{equation}
Note that this readily follows from the statements (ii) and (iv).

Next, the statement (vii) follows from the following consequence of the statement (ii): \\ 
If $\{B,D,E\}\subset\Xset,$ where $D\in \dot{\bar \ell}_1,$ is a special triplet, then $p^\star_{B,\Bset}(D)\neq 0.$

Indeed, from here we obtain that either $D=A$ or $D=C.$ This, in view of Proposition \ref{sptr2}, implies that $E=C$ or $E=A,$ respectively.

Finally, let us assume that the statements (i), (ii) take place and prove that the triplet $\{A,B,C\}$ is special in $\Xset.$ Indeed, according to (ii), the curve  $\mu_{k-1}:=\frac{1}{\ell_1}\ \mu\in\Pi_{k-1}$ does not vanish at exactly $n-k+2$ nodes of the line $\ell_1$, namely at the nodes of the set $\dot{\bar \ell}_1.$   Now, (ii) and the relation \eqref{1.2.3} imply  that the polynomial 
$$p^\star_{B,\Bset}\ \mu_{k-1}\in \Pi_{n-1}$$
 vanishes at all the nodes of the set $\Xset\setminus\{A,B,C\}.$

Therefore  $\{A,B,C\}$ is a special triplet in $\Xset.$
\end{proof}

\subsection{The case of several special triplets}

Next, let us consider the case when there are two or more special triplets with a common node. 

\begin{theorem}\label{prp:m2ket}
Let $\Xset, \mu:=\mu_k, \Bset:=\Bset_{n-k},\Aset:=\Aset_{k},$ be as in Theorem \ref{th1sptr}.
Let $\Xset, \mu:=\mu_k, \Bset:=\Bset_{n-k},\Aset:=\Aset_{k},$ be as in Theorem \ref{th1sptr}. Suppose that the distinct triplets $\{A_i,B,C_i\},\  1\le i\le m,$ with a common node $B$ are special, where $m\ge 2, A_i \in \Aset, B\in \Bset,$ and $C_i \in \Xset.$ 

Then, the following statements take place for each $i,\ 1\le i\le m:$
\begin{enumerate}
\setlength{\itemsep}{0mm}
\item
The line $\ell_i:=\ell_{A_iC_i}$ is a component of $\mu_{k-i+1},$ in particular  $B\notin\ell_i$ and $C_i\in\Aset_{k-i+1}.$  These $m$ components are distinct, hence $m\le k.$ Moreover, we have that
$$\{A_j,C_j\}\cap\ell_{A_iC_i} =\emptyset\ \ \forall\ 1\le j\le m,\ j\neq i.$$
\item
The set $\dot{\bar\ell}_i:=\{M\in{\ell_i}\cap  \Xset: \mu_{k-i}(M)\neq 0\}$ besides the nodes $A_i$ and $C_i,$ contains exactly $n-k+i-1$ nodes of $\Xset,$ from which $n-k$ coincide with the zeroes of $p^\star_{B,\Bset}\in\Pi_{n-k}$ on the line $\ell_i,$ which are all real and distinct, and $i-1$ are the following points of intersection: 
$$D_{ij}:=\ell_i\cap\ell_j\ \ \forall\ 1\le j \le i-1.$$ 
\item
The curve $\mu_{k-i}:=\frac{1}{{\ell_i}}\mu_{k-i+1}=\frac{1}{\ell_1\cdots\ell_i}\ \mu$  is a maximal curve of degree $k-i.$  
\item
The set  
$\Bset_{n-k+i}:=\Xset\setminus\mu_{k-i}=\Bset_{n-k+i-1}\cup \dot{\bar\ell}_i=\Bset\cup\dot{\bar\ell}_1\cup\cdots\cup\dot{\bar\ell}_i
$ 

is an  $(n-k+i)$-correct set, for which $\ell_i$ is a maximal line.
\item
$\Aset_{k-i}:=\Xset\cap\mu_{k-i}=\Aset_{k-i-1}\setminus\dot{\bar \ell}_i=\Aset\setminus (\dot{\bar \ell}_1\cup\cdots\cup\dot{\bar \ell}_i).$ 

\item
The line $\ell_i$ is a maximal line in  $\Bset_{n-k+s}$, where $i\le s\le m.$ In particular all the lines $\ell_1,\ldots,\ell_m$ are maximal lines in $\Bset_{n-k+m}.$
 \end{enumerate}
\end{theorem}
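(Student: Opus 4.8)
The plan is to run an induction on $i$ from $1$ to $m$ in which Theorem \ref{th1sptr} serves as the single-step engine: at stage $i$ one peels the line $\ell_i$ off the maximal curve $\mu_{k-i+1}$ exactly as Theorem \ref{th1sptr} peels $\ell_1$ off $\mu$, replacing the pair $(\Xset,\mu)$ there by $(\Xset,\mu_{k-i+1})$ and $\Bset$ by $\Bset_{n-k+i-1}$. As a preliminary, I would apply Theorem \ref{th1sptr} to each triplet $\{A_i,B,C_i\}$ separately, all with respect to the fixed curve $\mu=\mu_k$. Its part (i) already gives that every $\ell_i:=\ell_{A_iC_i}$ is a line-component of $\mu_k$ with $B\notin\ell_i$ and $C_i\in\Aset$, while its part (vii) forces the $\ell_i$ to be pairwise distinct: if $\ell_i=\ell_j$ with $i\ne j$, then $A_j$ lies on $\ell_i$ off the remaining components of $\mu_k$, so (vii) identifies $\{A_j,B,C_j\}$ with $\{A_i,B,C_i\}$, a contradiction. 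Since $\mu_k$ has no repeated component, distinctness yields $m\le k$ and makes $\mu_{k-i}:=\mu/(\ell_1\cdots\ell_i)$ a genuine polynomial of degree $k-i$.

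For the inductive step I assume (i)--(v) for all smaller indices, so $\mu_{k-i+1}$ is a maximal curve and $\Bset_{n-k+i-1}=\Xset\setminus\mu_{k-i+1}$ is $(n-k+i-1)$-correct. The first task is to verify the hypotheses of Theorem \ref{th1sptr} for $(\Xset,\mu_{k-i+1})$ and $\{A_i,B,C_i\}$, i.e.\ that $A_i\in\Aset_{k-i+1}$, equivalently $A_i\notin\dot{\bar\ell}_j$ for $j<i$; this is immediate from part (vii) at the earlier stages, since otherwise $\{A_i,B,C_i\}$ would coincide with $\{A_j,B,C_j\}$. Theorem \ref{th1sptr} then delivers (i), (iii), (iv), (v) for the index $i$ after the obvious relabelling, together with the telescoping identities $\mu_{k-i}=\mu/(\ell_1\cdots\ell_i)$ and $\Bset_{n-k+i}=\Bset\cup\dot{\bar\ell}_1\cup\cdots\cup\dot{\bar\ell}_i$. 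To obtain the refined form of (ii) I would use the factorization
\[
p^\star_{B,\Bset_{n-k+i-1}}=\ell_1\cdots\ell_{i-1}\,p^\star_{B,\Bset},
\]
valid because $\ell_1,\dots,\ell_{i-1}$ are maximal lines of $\Bset_{n-k+i-1}$ missing $B$, hence used by $B$ (Proposition \ref{maxcor}), while the cofactor vanishes on $\Bset\setminus\{B\}$ and not at $B$. The $n-k+i-1$ simple real zeros of $p^\star_{B,\Bset_{n-k+i-1}}$ on $\ell_i$ produced by Theorem \ref{th1sptr}(ii) then split into the $n-k$ zeros of $p^\star_{B,\Bset}$ and the $i-1$ crossings $D_{ij}=\ell_i\cap\ell_j$, $j<i$; simplicity of the zeros shows these are distinct (no three of $\ell_i,\ell_j,\ell_{j'}$ concurrent) and that each $D_{ij}$ is a genuine node lying in $\dot{\bar\ell}_i$.

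The delicate, interlocking part — and the main obstacle — is the bookkeeping that ties (ii), the disjointness in (i), and the persistence statement (vi) together. The factorization above needs each $\ell_j$ ($j<i$) to be maximal already in $\Bset_{n-k+i-1}$; this is not (iv), which only gives maximality in $\Bset_{n-k+j}$, but its persistence, which I would prove by a one-step lemma: passing from $\Bset_{n-k+s}$ to $\Bset_{n-k+s+1}$ adjoins the nodes $\dot{\bar\ell}_{s+1}\subset\ell_{s+1}$, and a maximal line $\ell_j$ with $j\le s$ meets $\ell_{s+1}$ in the single point $D_{s+1,j}$, which lies in $\dot{\bar\ell}_{s+1}$ by (ii) at stage $s+1$; hence $\ell_j$ gains exactly one node and stays maximal. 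Since this argument at stage $i$ invokes only (ii) at stages $\le i$, the whole scheme is non-circular. Iterating the lemma to the top gives (vi), and in particular that all of $\ell_1,\dots,\ell_m$ are maximal in $\Bset_{n-k+m}$.

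Finally, the full disjointness $\{A_j,C_j\}\cap\ell_i=\emptyset$ for $j\ne i$ is obtained by locating a hypothetical coincidence $A_j=\ell_i\cap\ell_j=D_{ij}$ inside $\dot{\bar\ell}_{\max(i,j)}$. If $i<j$, then $D_{ij}$ is one of the crossing nodes of $\dot{\bar\ell}_j$, which by (ii) is distinct from the element $A_j$ of $\dot{\bar\ell}_j$ — contradiction. If $i>j$, then $A_j=D_{ij}\in\dot{\bar\ell}_i$, so part (vii) at stage $i$ again forces $\{A_j,B,C_j\}=\{A_i,B,C_i\}$ — contradiction. The identical two alternatives dispose of $C_j$, completing (i) and hence the whole theorem.
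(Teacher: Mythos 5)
Your proposal is correct and follows essentially the same route as the paper's proof: an induction whose single step is Theorem \ref{th1sptr}, with its part (vii) used to place $A_i$ in $\Aset_{k-i+1}$ and to force the disjointness claims, the factorization $p^\star_{B,\Bset_{n-k+i-1}}=\ell_1\cdots\ell_{i-1}\,p^\star_{B,\Bset}$ driving statement (ii), and a node count on each line $\ell_j$ giving the persistence of maximality in (vi). The only cosmetic deviations are that you derive the distinctness of the lines $\ell_i$ from part (vii), where the paper instead observes that a repeated $\ell_j$ would be a multiple component of the maximal curve $\mu$, and that you recast the paper's $G_1,G_2,G_3$ counting for (vi) as an incremental ``gains exactly one node'' lemma.
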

\begin{proof}
Let us use indiction on $m$ for the first two statements of Theorem.
The case $m=1$ follows from Theorem \ref{th1sptr}.
Let us assume that statements (i) and (ii) of Theorem are valid for $m$ and prove them for $m+1.$
Later we will prove, as in the case of Theorem \ref{th1sptr}, that statements (i) and (ii) imply the remaining statements (iii)-(vi). 

Let $\{A_i,B,C_i\},$ be the distinct special triplets $1\le i\le m+1,$ where $A_i \in \Aset,\ B\in \Bset,$ and  $C_i \in \Xset.$ 
Since the triplets are distinct, Proposition \ref{sptr2} readily implies that 
\begin{equation*}\label{ACij}\{A_i,C_i\}\cap \{A_j,C_j\}=\emptyset\ \ \forall\ 1\le i<j\le m+1.\end{equation*}

Now, let us check that $A_{m+1}\in \Aset_{k-m}.$

Indeed, in view of  Theorem \ref{th1sptr}, (vii), with $\{A,B,C\}=\{A_i,B,C_i\},\ 1\le i\le m,$ and $\{B,D,E\}=\{A_{m+1},B,C_{m+1}\},$ we conclude that $A_{m+1}\notin \dot{\bar\ell}_i,$ since $A_{m+1}$ differs from $A_i$ and $C_i.$  Hence (v), with $i=m,$ implies that $A_{m+1}\in\Aset_{k-m}.$

Next, let us apply Theorem \ref{th1sptr} for the special triplet $\{A_{m+1},B,C_{m+1}\},$   the maximal curve $\mu_{k-m},$ and $\Aset_{k-m}:=\Xset\cap\mu_{k-m},\ \Bset_{n-k+m}:=\Xset\setminus\mu_{k-m}.$
Below we present the first two statements obtained and, after each one, we draw the corresponding conclusions.
\begin{enumerate}[label*=\arabic*.]
\setcounter{enumi}{0}
\setlength{\itemsep}{0mm}
\item
The line $\ell_{m+1}:=\ell_{A_{m+1}C_{m+1}}$ is a component of $\mu_{k-m},$ in particular $B\notin\ell_{m+1}$ and $C_{m+1}\in \Aset_{k-m}.$ 
\end{enumerate}
From here we conclude that the line $\ell_{m+1}$ differs from each of the lines
$\ell_1, \ldots,\ell_m.$ Indeed, the latter lines are components of $\mu:=\mu_k$ and if, say, $\ell_{m+1}=\ell_1,$ then we will have that $\ell_1$ is a component of $\mu_{k-m}=\frac{1}{\ell_1\cdots\ell_m}\ \mu.$ Hence $\ell_1$ is a multiple component of a maximal curve $\mu,$ which is contradiction.

Note that this proves (i) for $m+1,$ without the ``Moreover" part, since all the remaining statements of (i) hold by virtue of the induction hypothesis.
\begin{enumerate}[label*=\arabic*.]
\setcounter{enumi}{1}
\setlength{\itemsep}{0mm}
\item
The set $\dot{\bar\ell}_{m+1}:=\{M\in \ell_{m+1}\cap  \Xset: \mu_{k-m-1}(M)
\neq 0\}$ 
besides the nodes $A_{m+1}$ and $C_{m+1},$ contains exactly $n-k+m$ nodes of $\Bset_{n-k+m+1},$ from which $n-k$ are the zeroes of $p^\star_{B,\Bset}$ on the line $\ell_{m+1},$  and $m$ are the following points of intersection: 
$$D_{im+1}:=\ell_i\cap\ell_{m+1}\ \ \forall\ 1\le i \le m.$$ 
\end{enumerate}
Let us mention that the statement on $n-k+m$ nodes follows from Theorem \ref{th1sptr}, (ii), and  the formula
\begin{equation*}p^\star_{B,\Bset_{n-k+m}}=p^\star_{B,\Bset}\prod_{i=1}^m\ell_i,
\end{equation*}
 which is a direct consequence of Theorem \ref{th1sptr}, (iv).

\noindent This proves (ii). Note that the statements on the lines $\dot{\bar\ell}_i,\ 1\le i\le m,$ hold by virtue of the induction hypothesis.

Note also that above $n-k$ zeroes and $m$ points of intersection together make 
$n-k+m$ points. Since this number is the least possible (see the proof of Theorem \ref{th1sptr}), we conclude that all the mentioned
points are distinct. From here we get the ``Moreover" part of the statement (i).  

Next, let us verify that the statements (iii) to (vi) are valid.

First, in view of Proposition \ref{maxcomp}, with $m=1,$ we obtain that the curve 

$$\mu_{k-m-1}:=\frac{1}{\ell_{m+1}}\ \mu_{k-m}=\frac{1}{\ell_1\cdots\ell_{m+1}}\ \mu_{k}$$ is a maximal curve of degree $k-m-1.$ This gives (iii).

 Then, the relation \eqref{maxmax} implies that $\Bset_{n-k+m+1}$ is an  $(n-k+m+1)$-correct set, for which thus $\ell_{m+1}$ is a maximal line. The remaining equalities in (iv) and (v) follow from the definitions of $\mu_{k-m-1}$ and ${\dot{\bar{\ell}}}_{m+1}.$  Hence, the statements (iv) and (v) are valid. 

Finally, let us verify (vi). On the line $\ell_{i}$ we have $n-k+m+1$ nodes from $\Bset_{n-k+m}$ which can be divided into the following three groups:

$G_1:$ the nodes $A_i, C_i,$ 

$G_2:$ the $n-k$ nodes which are the zeroes of $p^\star_{B,\Bset}$ on the line $\ell_i,$  and 

$G_3:$ the $m-1$ intersection points: $\{D_{ij},\ \forall\ 1\le j\le m,\ j\neq i\}.$

Note that, according to (ii), these three sets are disjoint. 

For the first two groups,  in view of (ii) and (iv), we have that

$G_1\cup G_2\subset \dot{\bar\ell}_i\subset\Bset_{n-k+i}\subset\Bset_{n-k+m}.$

For the third group, similarly we obtain

$G_3\subset\dot{\bar\ell}_i\cup\cdots\cup\dot{\bar\ell}_s\subset\Bset_{n-k+m}.$
This proves (vi).
\end{proof}

\section{Some results and corollaries for $2$-node lines}
Now, let us consider the case when the set $\Xset$ is $GC_n$ set and the special triplets arise from $2$-node lines.
\begin{theorem}\label{avel+}
Let $\Xset$ be $GC_n$ set. Let also $\mu:=\mu_k, \Bset:=\Bset_{n-k},\Aset:=\Aset_{k}, \Aset_i, \Bset_i$ be as in Theorem \ref{th1sptr}. 
Assume that the distinct special triplets $\{A_i,B,C_i\},$ $1\le i\le m,$ with a common node $B$ arise from $2$-node lines $\ell_{A_i,B},$ respectively, where $m\ge 2, A_i \in \Aset,\ B\in \Bset,$ and $C_i \in \Xset.$ 

Then the set $\Bset_{n-k+m}$ is $GC_{n-k+m}$ set. The nodes $B,C_1,\ldots, C_m$ are collinear and give one more maximal line of $\Bset_{n-k+m}: \ \ell_{BC_1}.$  Thus the set $\Bset_{n-k+m}$ has $m+1$ maximal lines.
\end{theorem}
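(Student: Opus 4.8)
The plan is to treat the three assertions in turn, reducing everything to a single geometric fact whose proof is the real difficulty.

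First I would settle the $GC$ claim. By Theorem~\ref{prp:m2ket}(iii) the curve $\mu_{k-m}=\frac{1}{\ell_1\cdots\ell_m}\,\mu$ is a maximal curve of $\Xset$, and since $\Xset$ is a $GC_n$ set a maximal curve of degree $k-m$ is a product of $k-m$ distinct lines (see \cite{HVV}). Writing $\Yset:=\Bset_{n-k+m}=\Xset\setminus\mu_{k-m}$, for every node $M\in\Yset$ the relation~\eqref{maxmax} (equivalently Proposition~\ref{maxcor}) gives $p^\star_{M,\Xset}=c\,\mu_{k-m}\,p^\star_{M,\Yset}$ for a nonzero constant $c$. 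As $p^\star_{M,\Xset}$ is a product of $n$ linear factors and $\mu_{k-m}$ contributes $k-m$ of them, unique factorization forces $p^\star_{M,\Yset}$ to be a product of $n-k+m$ lines. Hence $\Yset$ is $GC_{n-k+m}$.

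Next I would transport the hypotheses into $\Yset$. For each $i$ we have $C_i\in\Yset$ and, by Proposition~\ref{sptr} applied in $\Xset$, $p^\star_{C_i,\Xset}=\ell_{A_iB}\,\tilde\mu_i$; dividing out $\mu_{k-m}$ as above and noting that $\ell_{A_iB}$ is not a component of $\mu$ (because $B\notin\mu$, hence $B\notin\mu_{k-m}$), I obtain that $C_i$ uses the $2$-node line $\ell_{A_iB}$ in $\Yset$. By Lemma~\ref{cor:2ket} the triplets $\{A_i,B,C_i\}$ are special in $\Yset$, and by Theorem~\ref{prp:m2ket}(vi) the lines $\ell_i=\ell_{A_iC_i}$ are now maximal lines of $\Yset$. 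A short computation with Proposition~\ref{sptr} and the identity $p^\star_{B,\Yset}=P\,\ell_1\cdots\ell_m$, where $P:=p^\star_{B,\Bset}$ (Theorem~\ref{prp:m2ket}(iv)), then yields $p^\star_{C_i,\Yset}=\ell_{A_iB}\,P\,\prod_{j\ne i}\ell_j$. Thus in $\Yset$ we have a $GC_{n-k+m}$ set carrying $m$ maximal lines $\ell_1,\dots,\ell_m$, a node $B$ lying on none of them, and special triplets arising from $2$-node lines.

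The decisive step is the following clean reduction, which I would carry out once the crux below is in hand: \emph{assuming that $\ell_0:=\ell_{BC_1}$ is a maximal line of $\Yset$}, the collinearity is immediate and handles all $C_i$ simultaneously. Indeed, no factor of $p^\star_{C_i,\Yset}$ passes through $B$ except $\ell_{A_iB}$ — the lines of $P$ and the maximal lines $\ell_j$ all avoid $B$, since a node never uses a line through itself — and $\ell_{A_iB}\ne\ell_0$ because the $2$-node line $\ell_{A_iB}$ passes only through $A_i,B$ while $C_1\in\ell_0$ with $C_1\notin\{A_i,B\}$. Hence $\ell_0\nmid p^\star_{C_i,\Yset}$, i.e.\ $C_i$ does not use $\ell_0$; but a maximal line is used by every node off it (Proposition~\ref{maxcor}), so necessarily $C_i\in\ell_0$. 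Therefore $B,C_1,\dots,C_m\in\ell_0$ and, together with $\ell_1,\dots,\ell_m$, this exhibits $m+1$ maximal lines of $\Yset=\Bset_{n-k+m}$.

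The main obstacle is exactly the hypothesis used above: proving that $\ell_{BC_1}$ is a maximal line of $\Yset$. Here both the $GC$ assumption and the $2$-node condition are essential. In the base situation where $\Bset$ is a single point (so $P\equiv1$ and $\Yset$ is a $GC_2$ set with the two maximal lines $\ell_1,\ell_2$), the $2$-node hypothesis excludes the Chung--Yao configuration — in a Chung--Yao set every used line is maximal, so no node can use a $2$-node line — which forces $\Yset$ to be a Carnicer--Gasca set and thereby supplies a third maximal line, necessarily $\ell_{BC_1}$ by the previous paragraph. For general degree I expect to induct on $\deg\mu_{k-m}$ (equivalently on $n-k$), peeling off maximal lines by Proposition~\ref{maxcomp} to descend to this base case; the force of the $2$-node condition is precisely that the additional maximal line meets each $\ell_i$ in $C_i$ rather than in $A_i$ (otherwise $\ell_{A_iB}$ would coincide with that maximal line, contradicting that it is a $2$-node line). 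Establishing the \emph{existence} of this extra maximal line for all degrees, as opposed to merely locating it once its existence is granted, is the crux of the argument and the step I expect to require the most care.
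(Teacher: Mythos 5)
Your proposal is correct and well checked up to the point you yourself flag, but the flagged point is a genuine gap, and it is precisely the step the paper closes with a trick you did not find. Everything before it is sound: the $GC$ claim for $\Bset_{n-k+m}$ via factoring $p^\star_{M,\Xset}=c\,\mu_{k-m}\,p^\star_{M,\Bset_{n-k+m}}$, the transport of the used $2$-node lines into $\Yset:=\Bset_{n-k+m}$ (using $B\notin\mu_{k-m}$ so that $\ell_{A_iB}\nmid\mu_{k-m}$), the identity $p^\star_{C_i,\Yset}=\ell_{A_iB}\,P\,\prod_{j\ne i}\ell_j$, and the conditional argument ``$\ell_{BC_1}$ maximal $\Rightarrow$ all $C_i\in\ell_{BC_1}$'' via Proposition \ref{maxcor} — all of this is correct. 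Your base case is also correct: in a $GC_2$ set every node uses a $3$-node line, the Chung--Yao alternative is excluded by the used $2$-node line, and the location argument (the third maximal line meets $\ell_1,\ell_2$ off $D_{12}$, its third node is $B$, and $A_i$ is excluded because $\ell_{A_iB}$ has only two nodes) pins it down as $\ell_{BC_1C_2}$. This is essentially the paper's own core argument, which instead takes the $3$-node line used by $D_{12}$ in $\Bset_2=\{B,A_1,C_1,A_2,C_2,D_{12}\}$, shows $B$ lies on it (else it would be $\ell_1$ or $\ell_2$, both passing through $D_{12}$), and concludes $C_1,C_2$ lie on it.

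The gap is the passage from the base case to general $k$, for which you propose an induction on $\deg\mu_{k-m}$ ``peeling off maximal lines by Proposition \ref{maxcomp}'' and admit the crux is open. This induction is not innocuous: removing a line component of $\mu_{k-m}$ leaves a maximal curve only under the ``exactly $d(n-k+m+1,1)$ nodes'' condition of Proposition \ref{maxcomp}, which is not automatic, and one must also keep $A_i\in\Aset$ along the way; none of this is checked. The paper avoids induction entirely by one observation you missed: the collinearity of $B,C_1,\dots,C_m$ is a statement about the nodes alone and does not refer to $\mu$ at all, so one may simply rerun Theorems \ref{th1sptr} and \ref{prp:m2ket} with $\mu$ replaced by the degree-$n$ maximal curve $p^\star_{B,\Xset}$, which contains every $A_i$ (since $p^\star_{B,\Xset}(A_i)=0$) and not $B$. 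This puts you instantly in your base case $k=n$, $\Bset=\{B\}$, $P\equiv 1$; moreover the paper reduces general $m$ to $m=2$, since pairwise collinearity of $B,C_1,C_i$ through the common node $B$ already forces all $C_i$ onto $\ell_{BC_1}$. So your proposal contains the full content of the paper's proof in its base case, but without the substitution $\mu\mapsto p^\star_{B,\Xset}$ the general statement remains unproved, and the inductive route you sketch would need substantial extra work to be made rigorous.
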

\begin{proof}
It suffices to consider only the case $m=2:$

Suppose that $\Xset$ is a $GC_n$ set and the special triplets $\{A_i, B, C_i\}$ arise from the 2-node lines $\ell_{A_iB},\ i=1,2.$ Let us prove that the three nodes $B,\ C_1$ and $C_2$ are collinear. 

Consider the special case  
$k=n, \mu=\mu_n=p^\star_{B,\Xset},$
of Theorem. We have that  
$\Bset_{n-k+2}\equiv \Bset_2=\{B,A_1,C_1,A_2,C_2,D_{12}\}$ 
is 
$GC_2$ set. Let $\ell$ be a $3$-node (maximal) line the node $D_{12}$ uses in $\Bset_2.$ First, let us prove that $B\in \ell.$ Assume, by way of contradiction, that $B\notin \ell.$ Then, we get that three nodes from the set  $\{A_1,C_1,A_2,C_2\}$ are collinear. Therefore the three nodes contain either the couple $A_1,C_1$  or $A_2,C_2.$ In other words $\ell$ coincides either with the line $\ell_1$ or $\ell_2.$ This is a contradiction since, according to (ii),  the last two lines intersect at the node $D_{12},$ which differs from $A_1, A_2, C_1$ and $C_2.$

Thus we have that $B\in \ell.$ Since $\ell_{A_1B}$ and   $\ell_{A_2B}$ are $2$-node lines we conclude that $A_1,A_2\notin \ell.$ Therefore $C_1,C_2\in \ell.$
\end{proof}

It is worth mentioning the following
\begin{corollary} \label{cor2}
Let $\Xset$ be an $n$-correct set. Assume that $2$-node lines: $\ell_{A_1B}$ and $\ell_{A_2B}$ with a common node $B,$
are used by nodes $C_1$ and  $C_2$, respectively, where $A_i,B,C_i\in \Xset.$  

Then $C_1\neq C_2.$ Also, $C_1=A_2 \iff C_2= A_1.$

In  the case $C_1\neq A_2$ (and therefore $C_2\neq A_1$) the following hold: 
\begin{enumerate}
\setlength{\itemsep}{0mm}
\item
The lines $\ell_1:=\ell_{A_1C_1}$ and $\ell_2:=\ell_{A_2C_2}$ are different components of $p^\star_{B,\Xset}.$
\item
The curve
$$\mu_{n-2}:=\frac{1}{\ell_1 \ell_2}p^\star_{B,\Xset}$$
is a maximal curve of degree $n-2.$
\item
The set $\Bset_2:=\Xset\setminus \mu_{n-2}=\{A_1,A_2,B,C_1,C_2,D\}$ is a $2$-correct set, where $\ell_1, \ell_2$ are maximal lines and 
\begin{equation}\label{C=cap22}D:=D_{12}=\ell_1\cap \ell_2.\end{equation}
\item
If $\Xset$ is $GC_n$ set then the nodes $B,C_1,C_2$ are collinear and $\Bset_2$ is a $GC_2$ set. 
\end{enumerate}
\end{corollary}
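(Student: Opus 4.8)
The plan is to derive this corollary almost entirely from the two main theorems already proved, specializing to the case $k=n$, so that the maximal curve $\mu$ is simply $p^\star_{B,\Xset}$ itself and $\Bset=\{B\}$ is $0$-correct. First I would dispose of the two preliminary claims. That $C_1\neq C_2$ is immediate from Corollary \ref{2nodeuse}: a single $2$-node line is used by at most one node, but here we have two \emph{distinct} $2$-node lines through $B$; if they had the same user $C_1=C_2$, that common node would be using two lines through $B$, and one checks via Proposition \ref{sptr2} (the special triplets $\{A_1,B,C_1\}$ and $\{A_2,B,C_1\}$ would force $A_1=A_2$, collapsing the two lines) that this is impossible. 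For the equivalence $C_1=A_2\iff C_2=A_1$, I would argue symmetrically: by Lemma \ref{cor:2ket} the triplet $\{A_1,B,C_1\}$ is special, and if $C_1=A_2$ then Proposition \ref{sptr} gives that $A_2$ uses $\ell_{A_1B}$; combining this with the hypothesis that $A_2$'s partner line $\ell_{A_2B}$ is used by $C_2$, a short argument with the associated maximal curves (invoking Proposition \ref{sptr2} on the triplet $\{A_2,B,A_1\}$) yields $C_2=A_1$, and the reverse by the same reasoning with indices swapped.

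Next, for the main case $C_1\neq A_2$ (equivalently $C_2\neq A_1$), I would verify that the hypotheses of Theorem \ref{prp:m2ket} are met with $k=n$, $m=2$, $\mu_n=p^\star_{B,\Xset}$, $\Bset=\Bset_0=\{B\}$, and $\Aset=\Xset\setminus\{B\}$. By Lemma \ref{cor:2ket} both triplets $\{A_1,B,C_1\}$ and $\{A_2,B,C_2\}$ are special (arising from the respective $2$-node lines), and the condition $C_1\neq A_2$ together with the already-established disjointness $\{A_1,C_1\}\cap\{A_2,C_2\}=\emptyset$ guarantees the triplets are distinct. Then statement (i) of Theorem \ref{prp:m2ket} gives directly that $\ell_1=\ell_{A_1C_1}$ and $\ell_2=\ell_{A_2C_2}$ are distinct components of $\mu_n=p^\star_{B,\Xset}$, which is item (i) of the corollary. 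Statement (iii) of that theorem, specialized to $k=n,\ i=2$, gives that $\mu_{n-2}=\tfrac{1}{\ell_1\ell_2}p^\star_{B,\Xset}$ is a maximal curve of degree $n-2$, which is item (ii).

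For item (iii), I would read off from statement (iv) of Theorem \ref{prp:m2ket} that $\Bset_2=\Xset\setminus\mu_{n-2}$ is $(n-(n-2))=2$-correct, with $\ell_1,\ell_2$ maximal lines. The explicit description $\Bset_2=\Bset\cup\dot{\bar\ell}_1\cup\dot{\bar\ell}_2$ from statement (iv), combined with statement (ii) (which identifies the extra node on $\ell_2$ beyond $A_2,C_2$ and the zeroes of $p^\star_{B,\Bset}$ as the single intersection point $D_{12}=\ell_1\cap\ell_2$, since here $n-k=0$ leaves no zero-nodes), pins down $\Bset_2=\{A_1,A_2,B,C_1,C_2,D\}$ with $D=\ell_1\cap\ell_2$, giving \eqref{C=cap22}. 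I would note that $\#\Bset_2=6=\dim\Pi_2$, confirming the cardinality. Finally, item (iv) is exactly the content of Theorem \ref{avel+} in the special case $k=n,\ m=2$: when $\Xset$ is $GC_n$, the set $\Bset_{n-k+m}=\Bset_2$ is $GC_2$ and the nodes $B,C_1,C_2$ are collinear.

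The only genuinely delicate step is the preliminary equivalence $C_1=A_2\iff C_2=A_1$, since it sits outside the generic situation handled by the theorems and must be argued by hand; I expect this to be the main obstacle, and I would handle it carefully through the associated maximal curves of the special triplets and a direct application of the uniqueness result Proposition \ref{sptr2}. Everything else is a matter of correctly instantiating $k=n$, $m=2$ in Theorems \ref{prp:m2ket} and \ref{avel+} and reading off the conclusions.
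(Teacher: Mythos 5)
Your proposal is correct and follows essentially the paper's own route: the paper presents Corollary \ref{cor2} as the specialization $k=n$, $m=2$, $\mu=p^\star_{B,\Xset}$ of Theorems \ref{prp:m2ket} and \ref{avel+}, exactly as you instantiate it, and your preliminary claims ($C_1\neq C_2$ and $C_1=A_2\iff C_2=A_1$ via Lemma \ref{cor:2ket} and Proposition \ref{sptr2}) reproduce the argument the paper gives in Proposition \ref{n>2}(i). Your reading of statement (ii) of Theorem \ref{prp:m2ket} with $n-k=0$ to pin down $\Bset_2=\{A_1,A_2,B,C_1,C_2,D_{12}\}$ is likewise the intended computation.
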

\begin{figure}
     \centering
     \begin{subfigure}[b]{0.4\textwidth}
         \centering
         \includegraphics[width=\textwidth]{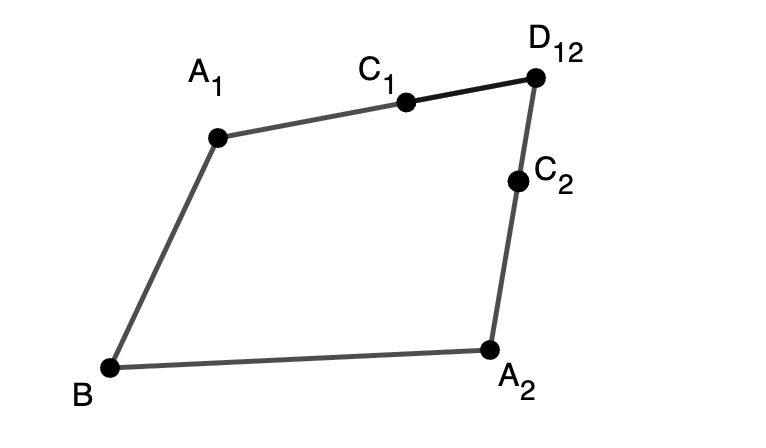}
         \caption{if $\Xset$ is an $n$-correct set}
         \label{fig.3a}
     \end{subfigure}
     \hfill
     \begin{subfigure}[b]{0.4\textwidth}
         \centering
         \includegraphics[width=\textwidth]{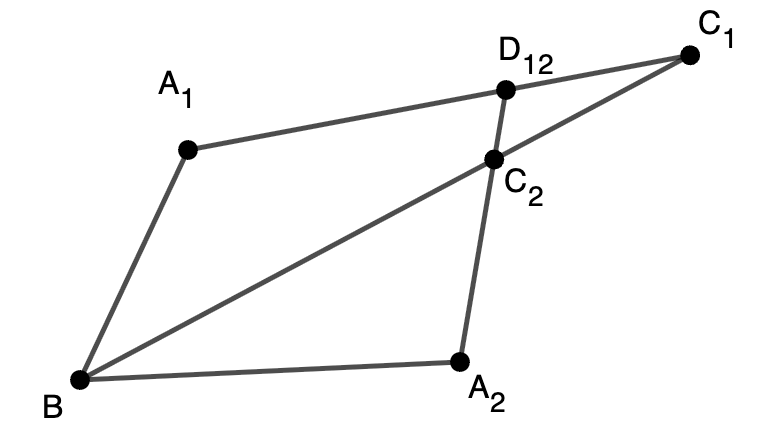}
         \caption{if $\Xset$ is a $GC_n$ set}
         \label{fig.3b}
     \end{subfigure}
            \caption{The set $\Bset_2$}
        \label{Fig4}
\end{figure}

In Fig. \ref{Fig4} we have the set $\Bset_2$ if $\Xset$ is (a): an $n$-correct and (b): a $GC_n$ set. Note that the set $\Bset_2$ in the case (a) is not $GC_2$ set, since (only) the fundamental polynomial of $D_{12}$ is not a product of linear factors. While  the set $\Bset_2$ in the case (b) is a $GC_2$ set.

Next we consider the problem whether a set of distinct used $2$-node lines corresponds to a set of distinct special triplets.
\begin{proposition} \label{n>2}
Let $\Xset$ be $GC_n$ set.
Let distinct $2$-node lines $\ell_{A_i,B},\  1\le i\le m,$  be used by the nodes $C_i,$ where $A_i,B,C_i\in\Xset.$ The following hold:
\begin{enumerate}
\setlength{\itemsep}{0mm}
\item
If $m=2$ then $C_1=A_2,$ which is equivalent to $C_2=A_1,$ implies that the corresponding  two triplets coincide.
While $C_1\neq A_2$ implies that the two triplets are distinct.
\item
If $m\ge 3$ then the corresponding special triplets $\{A_i,B,C_i\},\  1\le i\le m$ are distinct too.
\end{enumerate}
\end{proposition}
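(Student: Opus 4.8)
The plan is to treat (i) and (ii) separately, drawing on Corollary \ref{cor2} for the two-line case and on Theorem \ref{avel+} to obtain a contradiction in the three-or-more case. First I would record the elementary facts used throughout: by Lemma \ref{cor:2ket} each $\{A_i,B,C_i\}$ is a special triplet, so its three nodes are distinct; and since the $\ell_{A_iB}$ are distinct $2$-node lines all through $B$, the nodes $A_1,\ldots,A_m$ are pairwise distinct (coincidence of two $A_i$ would force the corresponding lines to coincide).

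For (i), with $m=2$ and $A_1\neq A_2$, the triplets $\{A_1,B,C_1\}$ and $\{A_2,B,C_2\}$ both contain $B$, so they coincide if and only if $\{A_1,C_1\}=\{A_2,C_2\}$; because $A_1\neq A_2$, this two-element equality forces simultaneously $A_1=C_2$ and $C_1=A_2$. Now Corollary \ref{cor2} supplies precisely the equivalence $C_1=A_2\iff C_2=A_1$, so these two identities are a single condition. Hence the triplets coincide exactly when $C_1=A_2$ (equivalently $C_2=A_1$) and are distinct exactly when $C_1\neq A_2$. This settles (i) with no further computation.

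For (ii) I would argue by contradiction. Suppose that among the $m\ge 3$ triplets two coincide, say after relabeling $\{A_1,B,C_1\}=\{A_2,B,C_2\}$. By part (i) this means $C_1=A_2$ and $C_2=A_1$, so the collapsed triplet is $\{A_1,A_2,B\}$, which is noncollinear by Proposition \ref{sptr}. Using $m\ge 3$, I take a third triplet $\{A_3,B,C_3\}$; since $A_3$ differs from $A_1$ and $A_2$, we have $C_1=A_2\neq A_3$ and $C_2=A_1\neq A_3$, so by part (i) the third triplet is distinct from each of the first two. I then apply Theorem \ref{avel+} with $m=2$ (in the special case $k=n,\ \mu=p^\star_{B,\Xset}$) twice: to the distinct pair $\{A_1,B,C_1\},\{A_3,B,C_3\}$, which yields that $B,C_1,C_3$, i.e.\ $B,A_2,C_3$, lie on the line $\ell_{BC_3}$; and to the distinct pair $\{A_2,B,C_2\},\{A_3,B,C_3\}$, which yields that $B,C_2,C_3$, i.e.\ $B,A_1,C_3$, lie on the same line $\ell_{BC_3}$ (well defined since $C_3\neq B$). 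Consequently $A_1,A_2,B$ all lie on $\ell_{BC_3}$, contradicting the noncollinearity of $\{A_1,A_2,B\}$. Therefore no two of the triplets coincide.

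The main obstacle lies entirely in (ii): one must verify, before invoking Theorem \ref{avel+} (whose hypotheses require \emph{distinct} special triplets arising from $2$-node lines), that the chosen third triplet is genuinely distinct from the collapsed pair, and one must check that the two applications yield collinearities along the \emph{same} line through $B$ and $C_3$. Everything else is bookkeeping of set equalities together with a direct citation of Corollary \ref{cor2}.
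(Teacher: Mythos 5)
Your proposal is correct and takes essentially the paper's route: part (i) is the same two-element set-equality argument (the paper derives the equivalence $C_1=A_2\iff C_2=A_1$ directly from Proposition \ref{sptr2} rather than citing Corollary \ref{cor2}), and part (ii) is the same contradiction via the collinearity supplied by Theorem \ref{avel+}, after verifying that the third triplet is distinct from the collapsed pair. The only difference is cosmetic: the paper applies Theorem \ref{avel+} once, to the pair $\{A_2,B,C_2\},\{A_3,B,C_3\}$, and contradicts the $2$-node property of $\ell_{A_1B}$ via $C_3\in\ell_{C_2B}\equiv\ell_{A_1B}$, whereas you apply it twice and instead contradict the noncollinearity of the special triplet $\{A_1,A_2,B\}$ guaranteed by Proposition \ref{sptr}.
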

\begin{proof}
(i) Assume that $m=2.$ Consider the triplets $\{A_1,B,C_1\}$ and $\{A_2,B,C_2\}.$
Since $A_1\neq A_2$ therefore, in view of Proposition \ref{sptr2}, we have that 
\begin{equation} \label{abbc}\{A_1,B,C_1\}=\{A_2,B,C_2\} \iff  A_1=C_2.\end{equation}
(ii) Now assume that $m=3.$ Consider three distinct used $2$-node lines: $\ell_{A_1B}, \ell_{A_2B}, \ell_{A_3B},$ with a common node $B,$ where $A_1,A_2,A_3,B\in \Xset.$
Assume, that these lines are
used by the nodes $C_1, C_2$ and  $C_3$, respectively, where $C_i\in \Xset.$

Now, we readily get that the nodes  $C_1,C_2$ and $C_3$ are distinct  since a node, in view of Corollary \ref{2nodeuse}, cannot use two different $2$-node lines. 

Then, let us prove that the nodes  $C_1,C_2,$ and $C_3$ are different from the nodes  $A_1,A_2,$ and $A_3.$ Of course we have that $C_i\neq A_i,\ i=1,2,3,$ since the node $C_i$ uses the line  $\ell_{A_iB}.$

Next, assume by way of contradiction that, say, $A_1=C_2.$ Then, in view of \eqref{abbc},  we obtain that $A_2=C_1.$ Now, we readily get that $C_3$ is different from $A_1$ and $A_2.$ 

Indeed, assume conversely that, say, $C_3=A_2.$ Then we have $C_3=C_1,$ which is contradiction.

Since $C_3\neq A_2$ (and therefore $C_2\neq A_3$, from \ref{abbc}),  in view of  Theorem \ref{avel+},  we get that the nodes $B, C_2, C_3$ are collinear.

Therefore we have that $C_3\in\ell_{C_2B}\equiv\ell_{A_1B}.$ Hence, the line $\ell_{A_1B}$ is not a $2$-node line, which is contradiction.

Thus the nodes  $C_1, C_2$ and $C_3$ are different from the nodes  $A_1,A_2,$ and $A_3$ and therefore the special triplets $\{A_i,B,C_i\},\  i=1,2,3,$ are distinct.

Now let us consider the case of general $m\ge 3.$ Here, by considering different triplets of nodes, we get readily that $C_1,\ldots C_m$ are distinct and
they are different from the nodes  $A_1,\ldots,A_m.$ Consequently, the corresponding triplets  $\{A_i,B,C_i\},\  1\le i\le m,$ are distinct.
\end{proof}
In Fig. \ref{Fig6} we have two used $2$-node lines $\ell_{A_iB},\ i=1,2,$ with a common node, in a $3$-correct set, which correspond to the same special triplet $\{A_1,B,A_2\}.$ Indeed, $A_2$ uses $\ell_{A_1B}$ and $A_1$ uses $\ell_{A_2B},$ since $p^\star_{A_2}=\ell_{A_1B}\ell_1\ell_2$ and $p^\star_{A_1}=\ell_{A_2B}\ell_1\ell_2.$ While, again in $3$-correct set,  in each Fig. \ref{Fig4} a) and b) we have two used $2$-node lines $\ell_{A_iB},\ i=1,2,$ with a common node, which correspond to two distinct special triplets $\{A_i,B,C_i\},\ i=1,2,$ respectively.
\begin{corollary}
\emph{(i)} Let $\Xset$ be an $n$-correct set, $n\ge 2.$ Then there are at most $n$ special triplets with a fixed common node $B\in \Xset.$ In the case there are $n$ such triplets we have exactly $n$ maximal lines in $\Xset$ not passing through $B.$

\emph{(ii)} Let $\Xset$ be $GC_n$ set, $n\ge 3.$ Then there are at most $n$ used $2$-node lines with a fixed common node $B\in\Xset.$ In the case there are $n$ such lines we have exactly $n+1$ maximal lines in $\Xset,$ from which $n$ not passing through $B$ and a maximal line passing through $B.$ Thus the set $\Xset$ is Carnicer-Gasca set.
\end{corollary}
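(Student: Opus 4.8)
The plan is to reduce both parts to the already-established Theorems \ref{th1sptr}, \ref{prp:m2ket} and \ref{avel+} by making, for a fixed common node $B\in\Xset$, the single most economical choice of maximal curve: $\mu:=p^\star_{B,\Xset}$ itself. Since $\Xset\setminus\mu=\{B\}$ is one node (hence $0$-correct), $\mu$ passes through $N-1=d(n,n)$ nodes and is thus a maximal curve of degree $k=n$, with $\Bset=\Bset_0=\{B\}$ and $\Aset=\Xset\setminus\{B\}$. Because the three nodes of a triplet are distinct, every special triplet $\{A,B,C\}$ with common node $B$ satisfies $A,C\in\Aset$ and $B\in\Bset$, so it falls exactly under the framework of Theorem \ref{prp:m2ket} with this $\mu$ and $k=n$.

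For the bound in (i) I would reason as follows. If there are $m\ge 2$ distinct special triplets with common node $B$, Theorem \ref{prp:m2ket}(i) shows the lines $\ell_i=\ell_{A_iC_i}$ are $m$ \emph{distinct} components of $\mu=\mu_k$, whence $m\le k=n$; the cases $m\le 1$ are trivial. In the equality case $m=n\ (\ge 2)$ the same chain gives that $\mu_{k-m}=\mu_0=\frac{1}{\ell_1\cdots\ell_n}\,p^\star_{B,\Xset}$ is a maximal curve of degree $0$, i.e.\ a nonzero constant, so
\begin{equation*}
p^\star_{B,\Xset}=c\,\ell_1\cdots\ell_n,\qquad c\ \text{a nonzero constant}.
\end{equation*}
By Theorem \ref{prp:m2ket}(vi) (with $s=m=n$, where $\Bset_{n-k+m}=\Xset$) each $\ell_i$ is a maximal line of $\Xset$, and by (i) none passes through $B$; thus $\Xset$ has \emph{at least} $n$ maximal lines avoiding $B$. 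To see there are no others I would invoke Proposition \ref{maxcor}: any maximal line $\ell'$ with $B\notin\ell'$ is used by $B$, hence divides $p^\star_{B,\Xset}=c\,\ell_1\cdots\ell_n$, forcing $\ell'\in\{\ell_1,\dots,\ell_n\}$. This yields exactly $n$ maximal lines not through $B$ and finishes (i).

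For part (ii) I would first translate used $2$-node lines into special triplets by Lemma \ref{cor:2ket}. If $\ell_{A_1B},\dots,\ell_{A_mB}$ are distinct used $2$-node lines through $B$, used by $C_1,\dots,C_m$, then for $m\ge 3$ Proposition \ref{n>2}(ii) guarantees the triplets $\{A_i,B,C_i\}$ are pairwise distinct. Combined with (i): if $m\ge n+1$, then since $n\ge 3$ forces $m\ge 3$, we would obtain at least $n+1$ distinct special triplets through $B$, contradicting the bound of $n$; hence $m\le n$. In the equality case $m=n\ (\ge 3)$ the triplets are distinct, so (i) supplies the $n$ maximal lines $\ell_1,\dots,\ell_n$ not through $B$, while Theorem \ref{avel+} (again with $k=n$, so $\Bset_{n-k+m}=\Xset$) shows $B,C_1,\dots,C_n$ are collinear and that $\ell_{BC_1}$ is a maximal line of $\Xset$ passing through $B$. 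Thus $\Xset$ has at least $n+1$ maximal lines.

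The one genuinely delicate point, and the step I expect to be the main obstacle, is upgrading ``at least $n+1$'' to ``exactly $n+1$'' and certifying the location of the extra line. I would finish by excluding the Chung--Yao alternative: were $\#\Mset(\Xset)=n+2$, then $\Xset$ would be a Chung--Yao set, in which every fundamental polynomial is a product of maximal $(n+1)$-node lines, so no $2$-node line could be used --- contradicting the hypothesis that $n\ge 3$ used $2$-node lines exist. Hence $\#\Mset(\Xset)=n+1$, so by the characterization in the introduction $\Xset$ is a Carnicer--Gasca set of degree $n$; the $n$ lines $\ell_i$ avoid $B$ and the one remaining maximal line is $\ell_{BC_1}\ni B$, giving precisely the asserted configuration.
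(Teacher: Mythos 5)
Your proposal is correct and follows essentially the same route as the paper: both take $\mu=p^\star_{B,\Xset}$ as the maximal curve of degree $n$, get the bound $m\le n$ and the $n$ maximal lines avoiding $B$ from Theorem~\ref{prp:m2ket}, reduce used $2$-node lines to distinct special triplets via Lemma~\ref{cor:2ket} and Proposition~\ref{n>2}, obtain the extra maximal line through $B$ from Theorem~\ref{avel+}, and exclude the Chung--Yao alternative to pin down exactly $n+1$ maximal lines. Your only variation is cosmetic: you certify that no further maximal line avoids $B$ by dividing the explicit factorization $p^\star_{B,\Xset}=c\,\ell_1\cdots\ell_n$ (via Proposition~\ref{maxcor}), whereas the paper makes the equivalent degree-count argument through Proposition~\ref{prp:n+1ell}.
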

\begin{figure}
\begin{center}
\includegraphics[width=6.0cm,height=3.0cm]{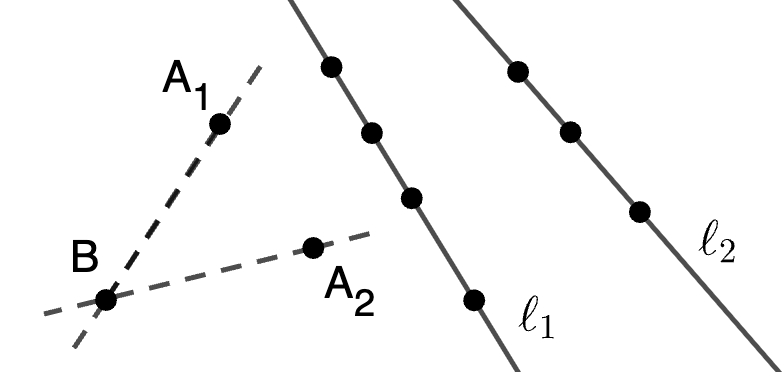}
\end{center}
\caption{Two used $2$-node lines correspond to one special triplet} \label{Fig6}
\end{figure}

\begin{proof}
(i) Let us apply Theorem \ref{prp:m2ket}, (i), with the maximal curve $\mu=p^\star_{B,\Xset}$ of degree $n.$ Then for the number $m$ of distinct special triplets with a fixed common node $B$ we have the inequality $m\le n.$

In the case there are 
$n$ such triplets we obtain from Theorem \ref{prp:m2ket}, (vi), that there are $n$ maximal lines: $\ell_1,\ldots,\ell_n$ in the $n$-correct set $\Bset_n\equiv \Xset$ not passing through $B.$ There is no another maximal line not passing through $B,$ which, in view of Proposition \ref{prp:n+1ell}, would contradict the existence of the fundamental polynomial $p^\star_B.$ 

(ii) In this case we obtain from Proposition \ref{n>2} that the triplets corresponding to $2$-node lines are distinct. Then Theorem \ref{avel+} implies that there are $n+1$ maximal lines in $\Xset,$ described in Corollary (ii). From here we readily obtain that there are exactly $n+1$ maximal lines in $\Xset.$ Indeed, otherwise if there are $n+2$ such lines then $\Xset$ becomes Chung-Yao set, where the only used lines are the maximal lines and therefore no $2$-node line is used. 
\end{proof} 
In Fig. \ref{Fig1} we have Carnicer-Gasca set ($GC_3$ set) and three used $2$-node lines with a common node. We have also exactly $3$ maximal lines: $\ell_i,\ i=1,2,3,$ not passing through $B$ and a maximal line: $\ell_0,$ passing through $B.$

At the end let us formulate a result concerning $GC_6$ sets.
\begin{corollary}\label{6655} Let $\Xset$ be $GC_6$ set without a maximal line. 
Suppose that there are three used $2$-node lines: $\ell_i:=\ell_{A_iB},\ i=1,2,3,$ with a common node $B,$ where $A_i,B\in \Xset.$ Then there are three $6$-node lines, not intersecting at nodes of $\Xset.$
\end{corollary}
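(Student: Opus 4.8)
The plan is to locate three lines that split off a degree-$3$ maximal curve and then to count nodes on them. First I would record that each used $2$-node line $\ell_{A_iB}$ yields, by Lemma~\ref{cor:2ket}, a special triplet $\{A_i,B,C_i\}$, and that by Proposition~\ref{n>2}, (ii) (with $m=3$) these three triplets are distinct. This makes Theorems~\ref{prp:m2ket} and~\ref{avel+} available with the choice $\mu=\mu_6=p^\star_{B,\Xset}$, a maximal curve of degree $k=n=6$: indeed $p^\star_{B,\Xset}$ vanishes at all $N-1=d(6,6)$ nodes other than $B$, so here $\Bset=\{B\}$ and $\ell_i=\ell_{A_iC_i}$.

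The key structural output I would extract is that $\mu_3:=\tfrac{1}{\ell_1\ell_2\ell_3}\,\mu$ is a maximal curve of degree $3$ (Theorem~\ref{prp:m2ket}, (iii), with $i=3$). Since $\Xset$ is a $GC_6$ set, a maximal curve factors into distinct lines (the consequence of \eqref{maxmax} quoted from \cite{HVV}, Prop.~5.1), so $\mu_3=m_1m_2m_3$ with $m_1,m_2,m_3$ pairwise distinct lines. These --- and not the lines $\ell_1,\ell_2,\ell_3$, which meet pairwise at the nodes $D_{ij}$ --- are my candidates for the three required lines. By Definition~\ref{def:maximal}, $\mu_3$ passes through exactly $d(6,3)=N_6-N_3=18$ nodes of $\Xset$.

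The last step is a short incidence count. Writing $n_j$ for the number of nodes of $\Xset$ on $m_j$, every one of the $18$ nodes on $\mu_3$ lies on at least one $m_j$, so $n_1+n_2+n_3\ge 18$, with equality precisely when no node lies on two of the lines. Since $\Xset$ has no maximal line, Proposition~\ref{prp:n+1ell} gives $n_j\le 6$ for each $j$, hence $n_1+n_2+n_3\le 18$. Comparing the two inequalities forces $n_1=n_2=n_3=6$ together with the equality case, i.e. no node is shared by two of the lines. Thus $m_1,m_2,m_3$ are three $6$-node lines whose pairwise intersections are not nodes of $\Xset$, as claimed.

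I expect the only genuinely delicate points to be bookkeeping rather than difficulty: checking that $\mu=p^\star_{B,\Xset}$ meets all the hypotheses of Theorems~\ref{prp:m2ket} and~\ref{avel+} (that it is a maximal curve of degree $6$, and that $A_i\in\Aset$, $B\in\Bset=\{B\}$), and that the distinctness supplied by Proposition~\ref{n>2}, (ii) really licenses $m=3$. The conceptual crux is recognizing that the sought lines are the components of the residual cubic $\mu_3$; once its splitting into three lines is in hand, the numerical coincidence $d(6,3)=18=6+6+6$ simultaneously pins the node-count on each line at $6$ and rules out any node sitting on two of them.
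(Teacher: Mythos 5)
Your proof is correct and follows essentially the same route as the paper's: distinct special triplets via Proposition \ref{n>2}, the maximal cubic $\mu_3=\frac{1}{\ell_1\ell_2\ell_3}\,p^\star_{B,\Xset}$ via Theorem \ref{prp:m2ket}, its splitting into three distinct lines from the $GC$ property, and the count $d(6,3)=18=6+6+6$ forced by the absence of a maximal line. If anything, you are more explicit than the paper: you note that the split-off lines are the $\ell_{A_iC_i}$ (not the $2$-node lines $\ell_{A_iB}$, which pass through $B$ and so cannot divide $p^\star_{B,\Xset}$), and you spell out the equality case of the incidence count that yields both the six-node property and the node-free pairwise intersections.
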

\begin{proof} Assume, by way of contradiction, that there are the mentioned three used $2$-node lines: $\ell_i,\ i=1,2,3.$
Then, according to Proposition \ref{n>2}, (iii),  the three special triplets corresponding to the three used $2$-node lines are distinct. Thus, by using Theorem \ref{prp:m2ket}, we obtain that the curve 
$$\mu_{3}:=\frac{1}{\ell_1 \ell_2 \ell _3}p^\star_{B,\Xset}$$
is a maximal curve of degree $3.$ Hence $\mu_3$ passes through $d(6,3)=18$ nodes and is a product of three distinct lines. Since these lines are not maximal, we readily get that each passes through $6$ nodes and non of these nodes can be common for two or three of them.
\end{proof}


\end{document}